\theoremstyle{plain}
\newtheorem{theorem}{Theorem}[section]
\newtheorem{lemma}[theorem]{Lemma}
\theoremstyle{definition}
\newtheorem{remark}[theorem]{Remark}
\numberwithin{equation}{section}
\newcommand{\R}{\mathbb{R}}
\newcommand{\Z}{\mathbb{Z}}
\newcommand{\N}{\mathbb{N}}
\newcommand{\Sone}{\mathbb{S}^1}
\newcommand{\B}{\mathcal{B}}
\newcommand{\C}{\mathcal{C}}
\newcommand{\CC}{\mathcal{CC}}
\newcommand{\M}{\mathcal{M}}
\newcommand{\PF}{P_F}
\newcommand{\Pg}{P_g}
\newcommand{\eps}{\varepsilon}
\renewcommand{\phi}{\varphi}
\renewcommand{\setminus}{\smallsetminus}
\title{No semiconjugacy to a map of constant slope}
\author{Micha\l\ Misiurewicz}
\address{Department of Mathematical Sciences, IUPUI, 402 N. Blackford
    Street, Indianapolis, IN 46202}
\email{mmisiure@math.iupui.edu}
\author{Samuel Roth}
\address{Department of Mathematical Sciences, IUPUI, 402 N. Blackford
    Street, Indianapolis, IN 46202}
\email{sjroth@iupui.edu}
\subjclass[2010]{Primary: 37E05, Secondary: 37E10}
\keywords{interval maps, piecewise monotone maps, constant slope,
  semiconjugacy}
\date{February 11, 2014}
\begin{document}
\begin{abstract}
We study countably piecewise continuous, piecewise monotone interval
maps. We establish a necessary and sufficient criterion for the
existence of a nondecreasing semiconjugacy to a map of constant slope
in terms of the existence of an eigenvector of an operator acting on a
space of measures. Then we give sufficient conditions under which this
criterion is not satisfied. Finally, we give examples of maps not
semiconjugate to a map of constant slope via a nondecreasing map. Our
examples are continuous and transitive.
\end{abstract}
\maketitle

\section{Introduction}\label{intro}

The idea that some interval maps should be conjugate or semiconjugate
to maps of constant slope (we use the term ``constant slope'' instead
of the more accurate but clumsy ``constant absolute value of the
slope'') appeared first about 50 years ago. Parry~\cite{Parry0} proved
that continuous, transitive, piecewise monotone (with finite number of
pieces) interval maps are conjugate to maps of constant slope. Later,
Milnor and Thurston~\cite{MilnorThurston} proved an analogous result,
removing the assumption of transitivity, but replacing conjugacy by
semiconjugacy. Another proof of the Milnor-Thurston theorem appeared
in~\cite{ALM}. That proof, after small modifications, can be used for
maps of graphs or for piecewise continuous maps (with finitely many
pieces)~\cite{AM}.

In all cases we require that the (semi)conjugacy is via monotone maps
preserving orientation. This is a natural requirement; if we drop it
then we get a completely different, and less interesting, problem.
Also in all cases the logarithm of the slope is equal to the
topological entropy of the initial map. This is because the same is
true for the constant slope maps (see~\cite{MS}, \cite{ALM}).

A natural question is what can be said if the map is piecewise
monotone, but with \emph{countably} many pieces. When trying to make
such generalizations, one immediately encounters some basic problems.

The first problem is a definition of a countably piecewise monotone
map. For such continuous maps, what should we assume about the set of
turning points (local extrema)? For instance, if we allow the closure
of this set to be a Cantor set, there may be a substantial dynamics on
it, not captured by our considerations. Thus, it is reasonable to
assume that the closure of the set of turning points is countable.

The second problem is, what should the slope be? For countably
piecewise monotone maps of constant slope it is no longer true that
the entropy is the logarithm of the slope. There are obvious
counterexamples, where all points of the interval, except the
endpoints, are moved to the right, so the entropy is zero, but the
slope is larger than 1. Thus, there is no natural choice of the slope
of the map to which our map should be (semi)conjugate.

Recently, Bobok~\cite{Bobok} considered the case of continuous,
Markov, countably piecewise monotone interval maps. He found a
necessary and sufficient condition for the existence of a
nondecreasing semiconjugacy to a map of constant slope in terms
of the existence of an eigenvector for a certain operator. The
operator is given by a countably infinite 0-1 matrix representing
the transitions in the Markov system, and the criterion
asks for a nonnegative eigenvector in the sequence space $\ell^1$.
Bobok described a rich class of examples satisfying this criterion
and proved that for many of these examples the constant slope so
obtained is the exponential of the topological entropy of
the original interval map. However, he did not give any examples
that violate the criterion.

In the present paper, we study the general case of countably piecewise
continuous, piecewise monotone interval maps without any Markov
assumption. We also establish a necessary and sufficient criterion --
analogous to Bobok's -- for the existence of a nondecreasing
semiconjugacy to a map of constant slope. It is given, like
Bobok's criterion, in terms of existence of an eigenvector of some
operator, but the operator acts on measures rather than on sequences.
Then we construct a class of examples which violate that criterion.
Our examples are continuous and transitive and thus have positive
topological entropy.

The paper is organized as follows. In Section~\ref{notation} we
present notation and define objects used in the next sections. In
Section~\ref{sc} we establish the criterion for the semiconjugacy to a
map of constant slope. In Section~\ref{nosc} we produce sufficient
conditions for this criterion being not satisfied. Theorem~\ref{maint}
can be considered the main technical result of the paper.
Section~\ref{lift} contains the main theorem of the paper,
Theorem~\ref{main}, which gives us a large class of continuous
transitive interval maps for which Theorem~\ref{maint} applies.
Finally, in Section~\ref{examp} we give a concrete example of a
one-parameter family of continuous transitive countably piecewise
monotone interval maps that are not semiconjugate to a map of constant
slope via a nondecreasing map. We show that in this family there are
uncountably many Markov and uncountably many non-Markov maps.

\section{Notation and definitions}\label{notation}

Let us introduce the various notations and definitions required
to address the problem more clearly.

If $P$ is a closed, countable subset of $[0,1]$, then a
component of the complement of $P$ will be called a \emph{$P$-basic
interval}, and the set of all $P$-basic intervals will be denoted
$\B(P)$.

We want to consider countably piecewise monotone interval maps, but
not only continuous, but also piecewise continuous. That would mean
that there exists a closed countable set $P\subset[0,1]$ such that our
map is continuous and monotone on each $P$-basic interval. However,
this creates a question: what should be the values of our map at the
points of $P$? If the map is continuous, this is not a problem.
However, in general there is no good answer. Even if we allow two
values at those points (one-sided limits from both sides), $P$ may
have accumulation points, and there is no natural way of extending
our map to those points. Therefore we choose the simplest solution --
we do not define the map at all at the points of $P$. This is not a
new idea; a similar solution is normally used for instance in the
holomorphic dynamics on the complex projective spaces of dimension
larger than 1.

Thus, we define a class $\C$ of maps $f$ for which there exists a
closed, countable set $P\subset[0,1]$, $f:[0,1]\setminus P\to [0,1]$,
and $f$ is continuous and strictly monotone on each $P$-basic
interval. Note that we assume strict monotonicity; while it is
possible to do everything that we do assuming only monotonicity, the
technical details would be much more involved and they would obscure
the ideas.

Similarly as in measure theory where two functions are considered equal
if they differ only on a set of measure zero, we will consider two
elements of $\C$ equal if they are equal on a complement of a closed
countable set. This gives us a possibility of using different sets $P$
for a given map $f\in\C$. Each such set for which $f$ is continuous
and strictly monotone on $P$-basic intervals will be called
$f$-admissible.

\begin{lemma}\label{compos}
A composition of two maps from $\C$ belongs to $\C$.
\end{lemma}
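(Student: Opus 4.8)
The plan is to build an admissible set for the composition directly out of admissible sets for the two factors. Given $f,g\in\C$, fix an $f$-admissible set $P_f$ and a $g$-admissible set $P_g$, and consider the composition $g\circ f$. The natural candidate for an admissible set is
$$Q = P_f \cup f^{-1}(P_g),$$
where $f^{-1}(P_g)$ denotes the preimage taken inside $[0,1]\setminus P_f$, the set on which $f$ is defined. First I would verify that $Q$ is countable. Since $P_f$ is closed and countable, its complement has at most countably many components, so $\B(P_f)$ is countable. On each $I\in\B(P_f)$ the map $f$ is continuous and strictly monotone, hence a homeomorphism onto the interval $f(I)$; therefore $f^{-1}(P_g)\cap I$ is the homeomorphic preimage of the countable set $P_g\cap f(I)$, and is countable. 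A countable union of countable sets is countable, so $Q$ is countable.

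The key step, and the one I expect to be the main obstacle, is to show that $Q$ is closed; this is where the special structure of $\C$ must be used, since the closure of a countable set need not be countable in general. I would argue that the closure of $f^{-1}(P_g)$ adds no new points outside $Q$. Take $x$ in the closure of $f^{-1}(P_g)$. If $x\in P_f$ we are done, so suppose $x$ lies in some basic interval $I\in\B(P_f)$. Choose $x_n\in f^{-1}(P_g)$ with $x_n\to x$; since $I$ is open, $x_n\in I$ for large $n$, and continuity of $f$ on $I$ gives $f(x_n)\to f(x)$. As each $f(x_n)\in P_g$ and $P_g$ is closed, we get $f(x)\in P_g$, whence $x\in f^{-1}(P_g)\subseteq Q$. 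Thus the closure of $f^{-1}(P_g)$ is contained in $Q$, and $Q$ is closed.

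It remains to check that $g\circ f$ is continuous and strictly monotone on each $Q$-basic interval. Because $P_f\subseteq Q$, every $J\in\B(Q)$ is contained in a single basic interval $I\in\B(P_f)$, on which $f$ is continuous and strictly monotone. Moreover $J$ avoids $f^{-1}(P_g)$, so $f(J)$ is disjoint from $P_g$; being the continuous image of an interval, $f(J)$ is connected and hence lies in a single basic interval $K\in\B(P_g)$, on which $g$ is continuous and strictly monotone. For $x\in J$ the value $g(f(x))$ is therefore defined, and $g\circ f$, being a composition of two continuous strictly monotone maps, is continuous and strictly monotone on $J$. This shows that $Q$ is $(g\circ f)$-admissible, so $g\circ f\in\C$, completing the proof.
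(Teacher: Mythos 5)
Your proposal is correct and follows essentially the same route as the paper: the same candidate admissible set $P_f\cup f^{-1}(P_g)$, the same countability count over the countably many $P_f$-basic intervals, a sequence-based version of the same closedness argument (the paper phrases it via relative closedness in the closure of each basic interval), and the same final verification that each $Q$-basic interval maps into a single $P_g$-basic interval. No gaps.
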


\begin{proof}
We will show that if $f,g\in\C$, the set $P$ is $f$-admissible, and
$Q$ is $g$-admissible, then the set $P\cup f^{-1}(Q)$ is $g\circ
f$-admissible. First observe that the set $(f|_I)^{-1}(Q)$ is
countable for every $P$-basic interval $I$. Moreover, there are only
countably many $P$-basic intervals. Thus, the set $P\cup f^{-1}(Q)$ is
countable.

We may assume that $0,1\in P$. Let $[a,b]$ be the closure of a
$P$-basic interval. Then $(f|_{(a,b)})^{-1}(Q)$ is closed in $(a,b)$.
Since $a,b\in P$, the set $(P\cup f^{-1}(Q))\cap[a,b]$ is closed in
$[a,b]$. Since $P$ is closed in $[0,1]$, this proves that $P\cup
f^{-1}(Q)$ is closed in $[0,1]$.

Let $I$ be a component of the complement of $P\cup f^{-1}(Q)$. Then $I$
is a subset of a $P$-basic interval and $f(I)$ is a subset of a
$Q$-basic interval, so $g\circ f$ is continuous and strictly monotone on
$I$.
\end{proof}

Using this lemma, by induction we get that if $f\in\C$ then $f^n\in\C$
for every natural $n$. That is, we can iterate a map from $\C$ without
leaving this class of maps.

We do not want to abandon continuous maps. Therefore we consider the
class $\CC$ of continuous maps $f:[0,1]\to[0,1]$ for which there
exists a countable closed set $P\subset[0,1]$ such that
$f|_{[0,1]\setminus P}\in\C$. Results for maps from this class will
follow easily from the results for maps from $\C$. In view of
Lemma~\ref{compos}, composition of maps from $\CC$ belongs to $\CC$,
so in particular, iterates of a map from $\CC$ belong to $\CC$.

We will say that a map $f\in\C$ (or $f\in\CC$) has \emph{constant
  slope} $\lambda$, if for some $f$-admissible set $P$, $f$
restricted to each $P$-basic interval is affine with slope of absolute
value $\lambda$. Clearly, this property depends only on the map $f$,
and not on the choice of an $f$-admissible set $P$.

We will say that a nonatomic measure defined on the Borel
$\sigma$-algebra on the interval $[0,1]$ is \emph{strongly
  $\sigma$-finite} if there is a closed countable set $P\subset[0,1]$
such that each $P$-basic interval has finite measure. We denote by
$\M$ the set of all such measures. Observe that $\M$ is closed under
addition and under multiplication by positive real scalars.

Each map $f\in\C$ induces an operator $T_f:\M\to\M$ that acts on a
measure $\mu\in\M$ as follows. Choose an $f$-admissible set $P$.
For each $P$-basic interval $I$, consider the homeomorphism $f|_I
: I \to f(I)$. Pull back the measure $\mu|_{f(I)}$ by $f|_I$ (that is,
push it forward by $(f|_I)^{-1}$) to a measure on $I$. This defines
$T_f \mu$ on the interval $I$:
\begin{equation*}
(T_f \mu)|_I = (f|_I)^*(\mu|_{f(I)}), \quad I \in \B(P).
\end{equation*}
More explicitly,
\[
(T_f \mu)(A)=\sum_{I\in\B(P)} \mu(f(I \cap A))
\]
for all Borel sets $A$. By a common refinement argument, the
definition of $T_f$ depends only on the map $f$, and not on the choice
of an $f$-admissible set $P$. Moreover, as in the proof of
Lemma~\ref{compos}, if for a closed countable set $Q$ each $Q$-basic
interval has measure $\mu$ finite, then each $R$-basic interval, where
$R=P\cup f^{-1}(Q)$, has measure $T_f\mu$ finite. This shows that
indeed $T_f$ maps $\M$ to $\M$.

Note the linearity properties $T_f(\mu + \nu) = T_f\mu + T_f\nu$
and $T_f(\alpha \mu) = \alpha T_f\mu$ for $\mu, \nu \in \M$ and
$\alpha \geq 0$.

Instead of maps of the interval $[0,1]$ into itself, we can consider
maps of the circle or of the real line into itself. We will use for
them the same notation as for the interval maps.

\section{Semiconjugacy}\label{sc}

Although $\M$ is not a true linear space (multiplication by negative
scalars is not permitted), it is nevertheless quite fruitful to
consider eigenvectors for positive eigenvalues. Let us consider the
meaning of an eigenvector in this setting. Fix a map $f\in\C$ and an
$f$-admissible set $P$. The condition $T_f\mu=\lambda\mu$,
$\lambda > 0$, is equivalent to the condition that for all
$P$-basic intervals $I$, $(f|_I)^*(\mu|_{f(I)})=\lambda\mu|_I$. We
will suppress subscripts and write $f^*\mu=\lambda\mu$ when the
context is clear. However, if $f^*\mu=\lambda\mu$ on a $P$-basic interval
$I$, then the Radon-Nikodym derivative $\frac{df^*\mu}{d\mu}$
is identically $\lambda$ on $I$. This Radon-Nikodym derivative is the
measure-theoretic version of the Jacobian of $f$ on $I$ as defined by
Parry~\cite{Parry}. Thus, a measure $\mu\in\M$ satisfying
$T_f\mu=\lambda\mu$ is a measure of constant Jacobian $\lambda$ for
$f$. If $\mu$ is an eigenvector for $T_f$, then for any subinterval
$J$ contained in a single $P$-basic interval $I$, $\mu(f(J)) = \lambda
\mu(J)$. In words, if $\mu$ is an eigenvector for $T_f$, then within
each $P$-basic interval, $f$ uniformly stretches the $\mu$ measures of
intervals by a factor $\lambda$. It is suggestive to note that
constant slope maps have the same property, but with lengths in place
of measures. We will show that this has a deeper meaning. Let us
denote the Lebesgue measure by $m$.

\begin{lemma}\label{Leb}
The map $f\in\C$ has constant slope $\lambda$ if and only if $T_fm =
\lambda m$.
\end{lemma}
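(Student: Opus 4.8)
The plan is to prove both directions of the equivalence directly from the definition of the operator $T_f$ and the characterization of constant slope given just before the lemma. Fix an $f$-admissible set $P$. The key observation I would use is the one already highlighted in the discussion preceding the lemma: the condition $T_f m = \lambda m$ is equivalent to saying that on each $P$-basic interval $I$, the pullback $(f|_I)^*(m|_{f(I)})$ equals $\lambda\, m|_I$.

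For the forward direction, suppose $f$ has constant slope $\lambda$. Then on each $P$-basic interval $I$, the map $f|_I$ is affine with $|{\rm slope}| = \lambda$, so it is a homeomorphism onto $f(I)$ that scales Lebesgue measure by the factor $\lambda$: for any Borel set $A \subseteq I$, we have $m(f(A)) = \lambda\, m(A)$. Unwinding the definition $(T_f m)(A) = \sum_{I \in \B(P)} m(f(I \cap A))$, each summand $m(f(I\cap A))$ equals $\lambda\, m(I \cap A)$, and summing over the (countably many, disjoint) basic intervals gives $(T_f m)(A) = \lambda \sum_I m(I \cap A) = \lambda\, m(A)$, since $P$ is countable and hence $m(P)=0$. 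Thus $T_f m = \lambda m$.

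For the converse, suppose $T_f m = \lambda m$. By the equivalence noted above this means $(f|_I)^*(m|_{f(I)}) = \lambda\, m|_I$ on each $P$-basic interval $I$. Restricting attention to a single $I = (a,b)$, the homeomorphism $f|_I$ is strictly monotone and continuous, and the pullback condition says that for every subinterval $J = (c,d) \subseteq I$ we have $m(f(J)) = \lambda\, m(J)$, i.e. $|f(d) - f(c)| = \lambda |d - c|$. Holding one endpoint fixed, the function $x \mapsto f(x)$ then satisfies $|f(x) - f(c)| = \lambda|x - c|$ for all $x \in I$; by monotonicity the sign of $f(x)-f(c)$ is constant, so $f(x) = f(c) \pm \lambda(x-c)$ is affine with slope of absolute value $\lambda$ on $I$. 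Since this holds on every $P$-basic interval, $f$ has constant slope $\lambda$.

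The only point requiring mild care is the passage from the pullback identity to affineness on each basic interval, i.e. deducing from $m(f(J)) = \lambda\, m(J)$ for all subintervals $J$ that $f$ is genuinely affine and not merely measure-scaling in some weaker sense. This is where strict monotonicity and continuity of $f|_I$ are essential: they guarantee $f$ maps subintervals to subintervals and that the additive distance identity $|f(x)-f(c)| = \lambda|x-c|$ forces affineness. I expect this to be the main (though still routine) step; the rest is bookkeeping with the countable family $\B(P)$ and the fact that $m(P) = 0$.
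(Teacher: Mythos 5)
Your proposal is correct and follows essentially the same route as the paper: both directions reduce to checking the identity $m(f(J))=\lambda\,m(J)$ on subintervals of a single $P$-basic interval, with the converse deducing affineness from $|f(d)-f(c)|=\lambda|d-c|$ exactly as the paper does. The only cosmetic difference is that in the forward direction you assert directly that an affine map scales Lebesgue measure of all Borel sets, whereas the paper checks the identity on open intervals and invokes uniqueness of Borel measures agreeing on intervals; both are standard and equivalent.
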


\begin{proof}
Assume that $f$ has constant slope $\lambda$ and let $P$ be
$f$-admissible. Then $f$ is affine with slope $\pm\lambda$ on each
$P$-basic interval. Consider an arbitrary $P$-basic interval $I$ and
the restricted map $f|_I:I\to f(I)$. It suffices to prove that $f^*m =
\lambda m$ on this interval. For every subinterval $(a,b)\subset I$ we
have
\begin{equation*}
(f^*m)(a,b)=m(f((a,b)))=|f(b)-f(a)|=\lambda(b-a)=(\lambda m)(a,b).
\end{equation*}
Since the two Borel measures $f^*m$ and $\lambda m$ on $I$ agree on
all open intervals, they are in fact equal.

Conversely, assume that $T_fm = \lambda m$. Let $I$ be a
$P$-basic interval. Then for every subinterval $(a,b)\subset I$ we have
\begin{equation*}
|f(b)-f(a)|=m(f((a,b)))=(f^*m)(a,b)=\lambda m(a,b)=\lambda(b-a).
\end{equation*}
Therefore $f|_I$ is affine with slope
$\pm\lambda$. But $I$ was arbitrary. Therefore $f$ has
constant slope $\lambda$.
\end{proof}

\begin{remark}\label{Leb-R}
With only slight modifications in wording, the proof of
Lemma~\ref{Leb} goes through for maps of the real line into itself.
\end{remark}

\begin{theorem}\label{semiconj}
Let $f \in \C$ and let $\lambda>0$. Then $f$ is semiconjugate via a
nondecreasing map $\phi$ to some map $g\in\C$ of constant slope
$\lambda$ if and only if there exists a probability measure $\mu\in\M$
such that $T_f\mu = \lambda \mu$.
\end{theorem}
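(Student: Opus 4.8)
The plan is to realize the equivalence through the cumulative distribution function of $\mu$ and its relationship with Lebesgue measure. The bridge in both directions is Lemma~\ref{Leb}, which characterizes constant slope $\lambda$ by the Lebesgue eigenvector equation $T_g m = \lambda m$. Write $m$ for Lebesgue measure on $[0,1]$, and recall that a semiconjugacy $\phi$ here is a continuous nondecreasing surjection of $[0,1]$ onto itself satisfying $\phi\circ f = g\circ\phi$ off a suitable countable set.

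For the necessity direction, suppose such a $\phi$ and $g$ exist. I would set $\mu$ to be the Lebesgue--Stieltjes measure of $\phi$, equivalently $\mu(A)=m(\phi(A))$ on Borel sets $A$; on intervals this reads $\mu((a,b))=\phi(b)-\phi(a)$. Since $\phi$ is continuous, $\mu$ is nonatomic; since $\phi$ is onto, $\mu([0,1])=1$, so $\mu$ is a probability measure, and having finite total mass it lies in $\M$. To verify $T_f\mu=\lambda\mu$ I would compute on a single basic interval: for $(a,b)$ inside one $P$-basic interval $I$ on which $f$ is a monotone homeomorphism,
\[
(T_f\mu)((a,b))=\mu(f((a,b)))=|\phi(f(b))-\phi(f(a))|=|g(\phi(b))-g(\phi(a))|,
\]
and then invoke the constant slope of $g$ to equate this with $\lambda|\phi(b)-\phi(a)|=\lambda\,\mu((a,b))$. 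Agreement on all such subintervals yields the eigenvector equation.

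For the sufficiency direction, given a probability eigenvector $\mu$, I would define $\phi(x)=\mu([0,x])$. Nonatomicity makes $\phi$ continuous, and $\phi(0)=0$, $\phi(1)=1$ together with monotonicity make it a nondecreasing surjection. I would then define $g$ on $\phi([0,1])=[0,1]$ by $g(\phi(x))=\phi(f(x))$, checking well-definedness via the eigenvector property: if $\phi(x_1)=\phi(x_2)$ for $x_1,x_2$ in a common basic interval, then $\mu$ assigns zero mass to the interval between them, hence by $T_f\mu=\lambda\mu$ also to its $f$-image, forcing $\phi(f(x_1))=\phi(f(x_2))$. The constant slope of $g$ then follows by reversing the computation above: for $x_1,x_2$ in a single basic interval, $|g(\phi(x_2))-g(\phi(x_1))| = \mu(f(\text{between }x_1,x_2)) = \lambda\,\mu(\text{between }x_1,x_2) = \lambda|\phi(x_2)-\phi(x_1)|$. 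Finally, I would take $Q=\phi(P)$ as a candidate $g$-admissible set; it is countable as the image of a countable set and closed as the continuous image of a compact set, and on each $Q$-basic interval $g$ is affine with nonzero slope, hence continuous and strictly monotone, placing $g$ in $\C$.

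The main obstacle I anticipate is the sufficiency construction of $g$, specifically making the quotient-by-$\phi$ rigorous when $f$ is only defined off a countable set and when $\mu$ may vanish on some $P$-basic intervals. Such intervals are collapsed by $\phi$ to single points, so I must track which basic intervals survive, confirm that the surviving images tile $[0,1]$ correctly up to the countable set $Q$, and verify well-definedness of $g$ not only within a basic interval but also across the collapsed gaps between them. By contrast, the necessity direction is essentially a direct computation once the Lebesgue--Stieltjes measure is identified as the right candidate.
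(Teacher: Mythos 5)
Your proposal is correct and follows essentially the same route as the paper: for necessity you pull back Lebesgue measure by $\phi$ to get $\mu(A)=m(\phi(A))$ and verify the eigenvector equation on basic intervals, and for sufficiency you take $\phi(x)=\mu([0,x])$, check well-definedness of $g=\phi\circ f\circ\phi^{-1}$ via the vanishing of $\mu$ on collapsed intervals and their images, and conclude constant slope (the paper routes this last step through $T_gm=\lambda m$ and Lemma~\ref{Leb}, while you verify affineness directly, which amounts to the same computation). The ``collapsed gap'' worry you flag is resolved exactly as in the paper: points of $\phi(P)$ belong to the $g$-admissible set, so $g$ need not be defined there.
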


\begin{proof}
Assume that $f$ is semiconjugate to a map $g$ of constant slope
$\lambda$ by a nondecreasing map $\phi$. Let $P$ be an $f$-admissible
set; it is clear that then $\phi(P)$ is a $g$-admissible set. By
Lemma~\ref{Leb}, $T_g(m)=\lambda m$. Since $m$ is nonatomic, it can be
pulled back by the nondecreasing map $\phi$ to define a measure $\mu =
\phi^* m$; explicitly, $\mu(A) = m(\phi(A))$ for all Borel sets $A$.
Then $\mu$ is a nonatomic Borel probability measure. Now let $I$ be
any $P$-basic interval. Take restrictions of $f$, $g$, $\phi$, $m$,
and $\mu$ to the appropriate domains in the following commutative
diagram:
\begin{equation}\label{cd}
\begin{CD}
I @>f>> f(I)\\
@V{\phi}VV @VV{\phi}V\\
\phi(I) @>>g> g(\phi(I))
\end{CD}.
\end{equation}
Using these restricted maps and measures, $f^*(\mu)$ may be computed
on $I$ as
\begin{equation*}
f^*(\mu)=f^*(\phi^*(m))=\phi^*(g^*(m))=\phi^*(\lambda m)=\lambda
\phi^*(m)=\lambda\mu.
\end{equation*}
But $I$ was an arbitrary $P$-basic interval. Therefore $T_f\mu =
\lambda \mu$.

Conversely, assume that there exists a probability measure $\mu\in\M$
such that $T_f\mu=\lambda\mu$. Define a map $\phi$ by
$\phi(x)=\mu([0,x])$. This map is continuous, nondecreasing, and maps
$[0,1]$ onto $[0,1]$. To see that $\phi$ induces a well-defined factor
map $g$, suppose that $x_1<x_2$ and $\phi(x_1)=\phi(x_2)$. If the
interval $[x_1,x_2]$ contains a point of $P$, then $\phi(x_i)$ belongs
to $\phi(P)$, which will be a $g$-admissible set, so there is no need
to define $g$ at $\phi(x_i)$. Otherwise, the
interval $[x_1,x_2]$ is contained in a $P$-basic set $I$,
$\mu([x_1,x_2])=0$, and thus
\[
\mu(f([x_1,x_2]))=(T_f \mu)([x_1,x_2])=\lambda\mu([x_1,x_2])=0.
\]
Therefore $\phi(f(x_1))=\phi(f(x_2))$, so we can define $g$ by
$\phi\circ f\circ\phi^{-1}$. By construction, $f$ is semiconjugate to
$g$ by the nondecreasing map $\phi$. Also by construction, $\phi_*\mu$
is the Lebesgue measure $m$. Since the diagram~\eqref{cd} commutes and
$\phi$ is monotone, we see that $\phi(P)$ is a $g$-admissible set. It
remains to consider $T_g(m)$. Using the same restricted maps and
measures as in diagram~\eqref{cd}, we get
\begin{equation*}
g^*(m)=g^*(\phi_*(\mu))=\phi_*(f^*(\mu))=\phi_*(\lambda \mu)=\lambda
\phi_*(\mu)=\lambda m.
\end{equation*}
Since $I$ was arbitrary, this shows that $T_g(m)=\lambda m$. By
Lemma~\ref{Leb}, $g$ has constant slope $\lambda$.
\end{proof}

\begin{remark}\label{se-cont}
If $f\in\CC$, then the only way we could get a discontinuity of $g$ in
the above construction was when $x_1<x_2$, $\phi(x_1)=\phi(x_2)$, and
there is a point of $P$ between $x_1$ and $x_2$. However, then
\begin{multline*}
\mu(f([x_1,x_2])) = \mu(\bigcup_{I\in\mathcal{B}(P)} f(I\cap [x_1,x_2]))
\leq \sum_{I \in \mathcal{B}(P)} \mu(f(I \cap [x_1,x_2] )) = \\
= (T_f\mu)([x_1,x_2]) = \lambda \mu([x_1,x_2]) = 0
\end{multline*}
By the continuity of $f$, the set $f([x_1,x_2])$ includes the interval
with endpoints $f(x_1)$, $f(x_2)$. Therefore $\phi(f(x_1))=\phi(f(x_2))$,
so no discontinuity is created. This shows that Theorem~\ref{semiconj}
holds with $\C$ replaced by $\CC$, for both $f$ and $g$.
\end{remark}

\begin{remark}\label{se-circle}
With only slight modifications in wording, the proof of
Theorem~\ref{semiconj} and the considerations in Remark~\ref{se-cont}
go through for circle maps.
\end{remark}

\section{No semiconjugacy}\label{nosc}

Now we want to find conditions that prevent semiconjugacy to a map of
constant slope. We start with a technical lemma. One of our
assumptions is that $\lambda>2$. For piecewise monotone maps with
finite number of pieces this type of an assumption is usually
circumvented by taking a sufficiently high iterate of the map. If
$\lambda>1$ then for some large $n$ we get $\lambda^n>2$. However,
here we have another assumption, that the measures of $P$-basic
intervals are bounded away from 0, and taking an iterate of a map
could lead to this condition being violated.

\begin{lemma}\label{long}
Let $f\in\C$. Suppose that there exist $\lambda > 2$, $\delta > 0$,
$\mu \in \M$ and an $f$-admissible set $P$ such that
$T_f\mu=\lambda\mu$ and the measure of every $P$-basic interval $I$ satisfies
$\delta\leq\mu(I)<\infty$. Then for $\mu$ almost every $x$ in $[0,1]$
there exist infinitely many times $n_1 < n_2 < \ldots$ such that $x$
belongs to an interval which is mapped monotonically by $f^{n_k}$ to
an interval of $\mu$-measure at least $\delta$.
\end{lemma}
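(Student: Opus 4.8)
The plan is to prove the logically equivalent statement that the set of $x$ with only finitely many such times is $\mu$-null. First I would fix notation for iterates. For each $n$ the set $P_n=P\cup f^{-1}(P)\cup\dots\cup f^{-(n-1)}(P)$ is $f^n$-admissible (as in Lemma~\ref{compos}), and on each $P_n$-basic interval $f^n$ is a monotone homeomorphism. Since on each $P$-basic interval $f$ multiplies the $\mu$-measures of subintervals by $\lambda$ (the eigenvector property $f^*\mu=\lambda\mu$), and a $P_n$-basic interval $J$ has $f^i(J)$ contained in a single $P$-basic interval for $i<n$, composing the one-step stretchings gives $\mu(f^n(J))=\lambda^n\mu(J)$. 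For $x\notin P_n$ write $J_n(x)$ for the $P_n$-basic interval containing $x$ and set $S_n(x)=\mu(f^n(J_n(x)))=\lambda^n\mu(J_n(x))$. Call $n$ a \emph{good} time for $x$ if $S_n(x)\ge\delta$; then $J_n(x)$ is exactly an interval witnessing the conclusion. Putting $B_n=\{x:S_n(x)<\delta\}$, the assertion is that $\mu(\liminf_n B_n)=0$, i.e.\ $\mu\big(\bigcap_{k\ge N}B_k\big)=0$ for every $N$.

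The engine is a geometric dichotomy forced by $\mu(I)\ge\delta$ and $\lambda>2$. The key observation is that any interval $K$ with $\mu(K)<\delta$ meets at most two $P$-basic intervals: if $K$ contained two points of $P$ then, $P$ being closed (hence nowhere dense, as a countable closed set), $K$ would contain a whole $P$-basic interval and so have measure $\ge\delta$. Consequently, if $J$ is a $P_n$-basic interval that is \emph{bad} (that is, $K_J:=f^n(J)$ has $\mu(K_J)<\delta$), then $f^n|_J$ meets $P$ in at most one point, so $J$ splits into at most two $P_{n+1}$-basic subintervals. Thus, following only bad branches, each bad branch at level $n$ has at most two bad children at level $n+1$. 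Moreover a bad branch $J'$ at any level $m$ satisfies $\mu(J')=\lambda^{-m}\mu(f^m(J'))<\delta\lambda^{-m}$, since by definition $\mu(f^m(J'))<\delta$.

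I would then run this branching estimate from a fixed starting level. Fix $N$ and a $P_N$-basic interval $J_N$ with $\mu(f^N(J_N))<\delta$; only such intervals can meet $\bigcap_{k\ge N}B_k$, as that set is contained in $B_N$. Regarding $J_N$ as the root, the bad descendants at level $N+m$ number at most $2^m$, each of $\mu$-measure $<\delta\lambda^{-(N+m)}$, so
\[
\mu\Big(\bigcap_{k=N}^{N+m}B_k\cap J_N\Big)\le 2^m\,\delta\,\lambda^{-(N+m)}=\delta\,\lambda^{-N}\,(2/\lambda)^m .
\]
Since $\lambda>2$ this tends to $0$ as $m\to\infty$, whence $\mu\big(\bigcap_{k\ge N}B_k\cap J_N\big)=0$. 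Summing over the countably many $P_N$-basic intervals yields $\mu\big(\bigcap_{k\ge N}B_k\big)=0$, and then $\mu(\liminf_n B_n)=0$, as required. The step I expect to be the main obstacle is the geometric dichotomy of the second paragraph: a good branch may acquire bad children (the image $f^n(J_n(x))$ can straddle a point of $P$ so that the piece containing $f^n(x)$ is tiny), so the sets $B_n$ are not nested and no monotonicity argument is available. The binary-branching bound is precisely what tames this, and it is here that both hypotheses $\mu(I)\ge\delta$ and $\lambda>2$ are used decisively; localizing to a single root $J_N$ is what keeps the count finite under the mere (strong) $\sigma$-finiteness of $\mu$.
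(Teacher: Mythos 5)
Your proposal is correct and follows essentially the same route as the paper: the same binary-branching count of ``hereditarily bad'' intervals (at most $2^m$ at depth $m$, since an image of measure less than $\delta$ contains at most one point of $P$), the same bound $\mu(J')\le\delta\lambda^{-m}$ from the constant-Jacobian property, and the same geometric decay $(2/\lambda)^m\to 0$. The only difference is cosmetic packaging ($\liminf_n B_n$ versus the paper's recursively defined families $\B_i$ rooted at a fixed $P^N$-basic interval).
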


\begin{proof}
Fix an arbitrarily large natural number $N$ and choose an arbitrary
$P^N$-basic interval $J$. A $P^N$-basic interval means a component of
the complement of $\bigcup_{i=0}^{N-1} f^{-i}(P)$; thus, $f^N$ is
monotone and continuous on each $P^N$-basic interval. It suffices to prove that for
$\mu$ almost every $x$ in $J$ there exists a time $n \geq N$ and a
$P^n$-basic interval $L \subset J$ such that $x\in L$ and $\mu(f^n(L))
\geq \delta$.

If $\mu(f^N(J)) \geq \delta$, then we are done. Otherwise, $J$ is a
``bad'' interval, and we subdivide it at all the points of
intersection $J \cap f^{-N}(P)$ into $P^{N+1}$-basic intervals, which
we classify as either ``good'' or ``bad'' according as
$\mu(f^{N+1}(L))$ is either at least $\delta$ or smaller than
$\delta$, respectively. For points $x$ in the good $P^{N+1}$-basic
intervals, the claim holds. But if any of these intervals $L$ is bad,
we subdivide it further at all the points of intersection $L \cap
f^{-(N+1)}(P)$ into $P^{N+2}$-basic intervals, which we then classify
as good or bad, and so on.

To be more precise, we define $\B_0 = \{J\}$ and we recursively define
\begin{equation*}
\B_{i+1} = \{M\in \B(P^{N+i+1}):\mu(f^{N+i+1}(M)) < \delta
\textnormal{ and } \exists L \in \B_i, M \subset L \}.
\end{equation*}
Now observe that each bad interval $L$ at stage $i$ subdivides into at
most two bad intervals at stage $i+1$, because $\# ( L \cap
f^{-(N+i)}(P) ) = \# ( f^{N+i}(L) \cap P)$, and by hypothesis, an
interval of measure less than $\delta$ never contains more than one
point of $P$. It follows that $\# \B_i \leq 2^i$. The hypothesis
$T_f\mu=\lambda\mu$ means that wherever $f$ is monotone and continuous, it stretches
$\mu$ measures uniformly by the factor $\lambda$. This provides an
upper bound on the measures of bad intervals. If $L \in \B_i$, then
\[
\mu(L)=\lambda^{-(N+i)}\mu(f^{N+i}(L))\leq\lambda^{-(N+i)}\delta.
\]
It follows that
\[
\sum_{L\in\B_i}\mu(L)\leq\left(\frac{2}{\lambda}\right)^i
\lambda^{-N}\delta,
\]
and this quantity tends to zero as $i \to \infty$. Therefore almost
every point of $J$ falls at some stage of the process into a good
basic interval, and this proves our claim.
\end{proof}

The following lemma is an analog of Lebesgue's density theorem. While
it is known, it is difficult to find in the literature the statement
we want. Usually statements with one-sided neighborhoods are only
about the Lebesgue measure, while statements about more general
measures use balls around the density point. Therefore we show how to
deduce what we need from the statement about the Lebesgue
measure.

\begin{lemma}\label{density}
Let $\mu \in \M$ and let $A$ be a Borel set. Then for $\mu$ almost
every $x \in A$ the measures of all one-sided
neighborhoods of $x$ are positive, and
\begin{equation}\label{d0}
\lim_{\delta \searrow 0} \frac{\mu (A \cap [x, x+\delta))}{\mu
  ([x,x+\delta))} = \lim_{\delta \searrow 0} \frac{\mu (A \cap
  (x-\delta, x])}{\mu ((x-\delta,x])} = 1.
\end{equation}
\end{lemma}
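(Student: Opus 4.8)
The plan is to reduce the statement to the classical one-sided density theorem for Lebesgue measure by pushing $\mu$ forward through its own cumulative distribution function.

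Two preliminary reductions are essentially free. Since $\mu$ is nonatomic and any admissible set $P$ is countable, $P$ is $\mu$-null; and for a point $x$ in the interior of a $P$-basic interval, all sufficiently small one-sided neighborhoods lie inside that interval. As the assertion concerns only the limit $\delta\searrow 0$, it therefore suffices to prove it separately on each $P$-basic interval, on which $\mu$ is finite. Likewise, a point possessing some one-sided neighborhood of $\mu$-measure zero must be an endpoint of one of the at most countably many maximal open intervals on which $\mu$ vanishes; the collection of such endpoints is countable, hence $\mu$-null. This settles the positivity assertion, and from now on we may assume every one-sided neighborhood has positive measure.

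Fix a $P$-basic interval with closure $[c,d]$, put $L=\mu([c,d])<\infty$, and define $h(x)=\mu([c,x])$. Because $\mu$ is nonatomic, $h\colon[c,d]\to[0,L]$ is continuous, nondecreasing, and surjective, satisfies $h_*\mu=m|_{[0,L]}$, and obeys $\mu([x,x+\delta))=h(x+\delta)-h(x)$ together with the analogous identity for left neighborhoods. Let $Z$ be the set of all points contained in some nondegenerate interval on which $h$ is constant; there are at most countably many maximal such intervals, each $\mu$-null, so $\mu(Z)=0$, while $h$ is injective on $[c,d]\setminus Z$, with quantile inverse $q(s)=\inf\{y:h(y)\ge s\}$ satisfying $q_*m=\mu$. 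The crucial computation is that, for $x\notin Z$, the numerator transports cleanly:
\[
\mu\bigl(A\cap[x,x+\delta)\bigr)=m\bigl(B\cap[h(x),h(x+\delta))\bigr),\qquad B:=h(A),
\]
up to a $\mu$-null error, because $h^{-1}(B)\subseteq A\cup Z$ and $h_*\mu=m$. Dividing by the denominator identity $\mu([x,x+\delta))=m([h(x),h(x+\delta)))$ shows that the one-sided density quotient of $A$ at $x$ relative to $\mu$ coincides with the one-sided density quotient of $B$ at $t:=h(x)$ relative to $m$. Invoking the classical one-sided Lebesgue density theorem for $B\subseteq[0,L]$, we obtain a full-measure subset $G\subseteq B$ on which both one-sided Lebesgue densities equal $1$; then $\mu(A\setminus h^{-1}(G))\le\mu\bigl(h^{-1}(B\setminus G)\bigr)=m(B\setminus G)=0$, so $h(x)\in G$ for $\mu$-almost every $x\in A$, and since $h(x+\delta)\searrow h(x)$ by continuity this yields the two limits in~\eqref{d0} for $\mu$-almost every $x\in A$.

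The main obstacle is purely the bookkeeping around the non-injectivity of $h$: one must verify that replacing $A$ by $B=h(A)$, and the quantifier ``$\mu$-a.e.\ $x\in A$'' by ``$m$-a.e.\ $t\in B$,'' alters the relevant sets only by $\mu$-null or $m$-null amounts, the entire discrepancy being confined to the flat set $Z$. In particular one should confirm that $B=h(A)$ is Lebesgue measurable --- being the continuous image of a Borel set it is analytic, hence universally measurable; alternatively one works throughout with the Borel set $q^{-1}(A)$ and avoids images altogether. Once these null-set matters are checked, the conclusion is a direct substitution into the one-dimensional Lebesgue theorem.
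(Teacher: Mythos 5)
Your proposal is correct and follows essentially the same route as the paper: both arguments dispose of the positivity claim by discarding the countably many endpoints of maximal $\mu$-null intervals, then push $\mu$ forward to Lebesgue measure on $[0,\mu(L)]$ via the cumulative distribution function of $\mu$ restricted to a $P$-basic interval, observe that the one-sided density quotients are preserved under this transport, and invoke the classical one-sided Lebesgue density theorem before pulling the full-measure set back. Your additional care about the measurability of $h(A)$ and the flat set $Z$ is a welcome refinement of bookkeeping the paper leaves implicit, but it is not a different argument.
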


\begin{proof}
Let $P$ be a countable closed subset of $[0,1]$ such that the measure
of every $P$-basic interval is finite.
Let $L$ be a $P$-basic interval, and let $a$ denote the left endpoint
of $L$. It suffices to prove the claim for $\mu$ almost every $x$ in
$A \cap L$; therefore we may restrict everything to $L$.

There may exist closed subintervals of $L$ of measure zero. There are
countably many of such maximal intervals, so their union has measure
zero. Hence, we are free to remove this union, as well as the endpoints
of $L$, from $A$. Then, for each $x \in A$ and each positive $\delta$,
the measures $\mu([x,x+\delta))$ and $\mu((x-\delta,x])$ are nonzero.
Since we restrict everything to $L$, and $\mu(L)$ is finite, we see
that the ratios under consideration have finite numerators and
denominators, and so are well-defined.

Introduce a map $\phi:L\to Y$, where $Y=[0,\mu(L)]$, by
$\phi(x)=\mu((a,x))$ By construction, $\phi$ is nondecreasing.
Moreover, $\phi$ is continuous because $\mu$ is nonatomic. By the
definition of $\phi$, every interval $I \subset Y$ enjoys the property
$m(I) = \mu(\phi^{-1}(I))$. It follows that this property holds for
all measurable sets $I \subset Y$. Therefore
\begin{equation}\label{d1}
\lim_{\delta\searrow 0}\frac{\mu(A\cap[x,x+\delta))}
{\mu([x,x+\delta))}=\lim_{\eta\searrow 0}
\frac{m(\phi(A)\cap[\phi(x),\phi(x)+\eta))}
{m([\phi(x),\phi(x)+\eta))}
\end{equation}
and
\begin{equation}\label{d2}
\lim_{\delta\searrow 0}\frac{\mu(A\cap(x-\delta,x])}
{\mu((x-\delta,x])}=\lim_{\eta\searrow 0}
\frac{m(\phi(A)\cap(\phi(x)-\eta,\phi(x)])}
{m((\phi(x)-\eta,\phi(x)])}.
\end{equation}

The preimage under $\phi$ of a set of full Lebesgue measure in $Y$ has
full $\mu$ measure in $L$. By the Lebesgue density theorem (see,
e.g.,~\cite{Faure}), the limits of the right-hand sides of~\eqref{d1}
and~\eqref{d2} are 1 for Lebesgue almost all $x\in\phi(A)$.
Therefore~\eqref{d0} holds for $\mu$ almost all $x\in A$.
\end{proof}

In the next theorem we need an assumption stronger than transitivity.
Since the term ``strong transitivity'' is sometimes used for the
property that the union of the images of every nonempty open set is
the whole space, we have to use another term. We will say that a map
$f\in\C$ is \emph{substantially transitive} if for every nonempty open
set $U\in[0,1]$ the set $[0,1]\setminus\bigcup_{n=0}^\infty f^n(U)$ is
countable.

If $f\in\CC$ is transitive, we get substantial transitivity
automatically. We will need this later also for continuous circle
maps, so we will state a lemma for graph maps. Again, this lemma is
known, but it is easier to prove it than to look for it in the
literature.

\begin{lemma}\label{trans}
Let $X$ be a graph and let $f$ be a topologically transitive
continuous map of $X$ to $X$. Let $U$ be a nonempty open subset of $X$.
Then the set $X\setminus\bigcup_{n=0}^{\infty} f^n(U)$ is finite.
\end{lemma}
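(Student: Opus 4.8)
The goal is to prove that for a transitive continuous graph map $f\colon X\to X$ and a nonempty open set $U$, the set $X\setminus\bigcup_{n=0}^\infty f^n(U)$ is finite. Let me denote $V=\bigcup_{n=0}^\infty f^n(U)$, the forward orbit of $U$. I want to show its complement is finite.

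The plan is to exploit the structure of $V$ as a union of connected sets. First I would establish that $V$ is $f$-invariant in the sense that $f(V)\subseteq V$, which is immediate since $f(f^n(U))=f^{n+1}(U)\subseteq V$. Next, transitivity means there is a point whose forward orbit is dense; more usefully, for every nonempty open $W$ there is some $n$ with $f^n(U)\cap W\neq\emptyset$, equivalently $V$ is dense in $X$. The heart of the argument is geometric: each $f^n(U)$ is a connected subset of the graph $X$ (continuous image of a connected set), hence so is each partial union $\bigcup_{n=0}^N f^n(U)$ once one knows these pieces chain together. The key observation I would push on is that $V$ is a dense connected (or at least ``mostly connected'') subset of $X$, and a dense subset of a graph whose complement is not finite must fail to be reached, contradicting transitivity through the images.

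Here is the cleaner route I would take. A graph $X$ is a compact space that is locally an interval except at finitely many branch points, so it has finitely many edges. The essential point is that $V=\bigcup_n f^n(U)$ is open (each $f^n(U)$ is open because $f$ restricted to suitable pieces is... no, images need not be open). So instead I would argue on the closure. Let $K=X\setminus V$. Suppose for contradiction that $K$ is infinite. Since $X$ is a graph, an infinite set has an accumulation point $p\in X$. I would like to show $V$ contains a neighborhood of most points, forcing $K$ to be nowhere dense and in fact locally finite, hence finite by compactness. The mechanism: by transitivity, $V$ is dense, so $\overline{V}=X$; moreover I would show that $f(V)\subseteq V$ together with $f$ being an open-ish map on edges propagates openness. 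The cleanest tool is that for a transitive graph map, the set of points with dense orbit is residual, and one shows $X\setminus V$ consists only of points that are ``exceptional'' — finitely many, because each edge can contain only finitely many such points by a connectedness/interval argument.

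Concretely, the main steps are: (1) reduce to showing $V$ omits only finitely many points of each edge, using that $X$ has finitely many edges; (2) on a single edge, identified with an interval, argue that $V\cap(\text{edge})$ is a union of intervals whose complement is finite, because the images $f^n(U)$ are connected and accumulate density-wise, so their union on an interval can miss only a finite set (otherwise the omitted set would contain an interval or infinitely many gaps, contradicting that forward images of any subinterval eventually cover any target subinterval by transitivity); (3) combine over the finitely many edges and branch points. I expect the main obstacle to be step (2): carefully ruling out that the complement $K$ accumulates, i.e. showing that $K$ cannot contain a convergent sequence of distinct points on an edge. This is where I would invoke transitivity quantitatively — around an accumulation point $p$ of $K$, take a small open interval $W\ni p$; by transitivity some $f^n(U)$ meets $W$, and since $f^n(U)$ is connected it covers a subinterval of $W$, so $W\cap V$ contains an interval, and by iterating this and using the density of preimages one shows $W\setminus V$ cannot be infinite, yielding the contradiction. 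I would need to handle the subtlety that the graph map need not be open and that $f^n(U)$, while connected, might be a single point only in degenerate cases ruled out by $U$ having nonempty interior and $f$ being non-constant on components.
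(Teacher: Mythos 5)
There is a genuine gap at the decisive step. You correctly set up the forward orbit $V=\bigcup_{n\ge 0}f^n(U)$, note its forward invariance and (by transitivity) its density, and observe that each image $f^n(U)$ is connected once $U$ is replaced by a connected component. But the conclusion you need --- that a dense set obtained this way has \emph{finite} complement --- is never actually derived. Your fallback argument near an accumulation point $p$ of the complement only shows that every small interval around $p$ meets $V$ in a subinterval; that is just a strengthened form of density, and density (even ``contains intervals densely'') does not bound the complement: the complement of a dense open subset of an interval can be an infinite, indeed uncountable, closed set. The phrase ``by iterating this and using the density of preimages one shows $W\setminus V$ cannot be infinite'' is precisely the missing argument, and nothing in your sketch supplies it. Likewise, your remark that the partial unions are connected ``once one knows these pieces chain together'' flags the needed fact without proving it.

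The idea that closes the gap, and which the paper uses, is to show that $V$ has only \emph{finitely many connected components}; a dense subset of a graph with finitely many connected components does have finite complement, since each component meets each of the finitely many edges in a set whose closure adds only finitely many points. To get finitely many components: let $C$ be the connected component of $V$ containing $U$. By transitivity there is $N$ with $f^N(C)\cap C\neq\emptyset$; since $V$ is forward invariant and $f^N(C)$ is connected, $f^N(C)$ lies in a single component of $V$, hence $f^N(C)\subset C$. Consequently $V=\bigcup_{n=0}^{N-1}f^n(C)$, a union of $N$ connected sets. This dynamical step --- using transitivity to make the orbit of the component return to itself, rather than to make $V$ merely dense --- is the ingredient your proposal lacks, and without it the proof does not go through.
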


\begin{proof}
By replacing $U$ by one of its connected components, we may assume that $U$ is
connected. Let $V$ be the connected component of the set
$W=\bigcup_{n=0}^{\infty} f^n(U)$ that contains $U$. By transitivity,
there is $N$ such that $f^N(V) \cap V \neq \emptyset$. However, $V$ is
a component of a forward invariant set, so $f^N(V) \subset V$. It
follows that $W=\bigcup_{n=0}^{N-1} f^n(V)$, and thus $W$ has only
finitely many connected components. On the other hand, by transitivity
$W$ is dense in $X$. Therefore $W$ excludes only finitely many points
of $X$.
\end{proof}

\begin{remark}\label{trans-R}
If $X=\R$ and $f$ is a lifting of a degree one circle map, the same
proof shows that $\bigcup_{n=0}^{\infty} f^n(U)=\R$. For such $f$
there is a constant $M>0$ such that $|f(x)-x|<M$ for every $x\in\R$,
so $W$ is contained in the set of points whose distance from $V$ is
smaller than $NM$. Since $V$ is connected and $W$ is dense, we must
have $V=\R$.
\end{remark}

Now we can prove the main technical result of the paper.

\begin{theorem}\label{maint}
Let $f\in\C$ be a substantially transitive map and let $\lambda > 2$.
Assume that there exist $\delta > 0$, an infinite measure $\mu\in\M$,
and an $f$-admissible set $P$, such that $T_f\mu=\lambda\mu$ and the
measure of every
$P$-basic interval $I$ satisfies $\delta\leq\mu(I)<\infty$. Then
there is no probability measure $\nu\in\M$ such that
$T_f\nu=\lambda\nu$.
\end{theorem}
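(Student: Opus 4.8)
\emph{The plan is to argue by contradiction.} Suppose that a probability measure $\nu\in\M$ with $T_f\nu=\lambda\nu$ does exist. Rather than compare $\nu$ with $\mu$ directly (the parts of $\nu$ that are singular with respect to $\mu$ are awkward to control), I would introduce the measure $\rho=\mu+\nu$. Since $\M$ is closed under addition and $T_f$ is additive, $\rho\in\M$ and $T_f\rho=\lambda\rho$; moreover every $P$-basic interval $I$ satisfies $\rho(I)\ge\mu(I)\ge\delta$ and $\rho(I)<\infty$, while $\rho([0,1])=\infty$ because $\mu$ is infinite. Thus $\rho$ inherits all the hypotheses of Lemma~\ref{long}. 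Now $\nu\le\rho$, so $\nu$ is absolutely continuous with respect to $\rho$, and the Radon--Nikodym derivative $h=\frac{d\nu}{d\rho}$ takes values in $[0,1]$ for $\rho$-almost every point. The entire proof then reduces to showing that $h$ is $\rho$-almost everywhere equal to a constant.

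First I would establish that $h$ is \emph{invariant}, i.e.\ $h\circ f=h$ for $\rho$-almost every $x$. For this one differentiates $\nu$ against $\rho$ along shrinking one-sided neighborhoods: applying Lemma~\ref{density} to the superlevel sets $\{h>t\}$ shows that $\lim_{L\searrow x}\nu(L)/\rho(L)=h(x)$ for $\rho$-almost every $x$. Fix such an $x$ in the interior of a $P$-basic interval $I$ (the points of $P$ and the endpoints form a $\rho$-null set). For small $L\ni x$ contained in $I$, the image $f(L)$ is an interval shrinking to $f(x)$ with $\rho(f(L))=\lambda\rho(L)\to0$, and the eigenvector equation gives $\nu(f(L))=\lambda\nu(L)$ as well, so that
\[
h(f(x))=\lim_{L\searrow x}\frac{\nu(f(L))}{\rho(f(L))}=\lim_{L\searrow x}\frac{\lambda\nu(L)}{\lambda\rho(L)}=h(x).
\]
More generally, if $L$ lies in a single $P^{n}$-basic interval then $\nu(f^{n}(L))/\rho(f^{n}(L))=\nu(L)/\rho(L)$, so the $\rho$-average of $h$ over the \emph{macroscopic} image $f^{n}(L)$ equals the local ratio $\nu(L)/\rho(L)$.

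\emph{The heart of the matter — and the step I expect to be the main obstacle — is to upgrade invariance to constancy, i.e.\ to prove that $f$ is ergodic with respect to $\rho$.} This is exactly where the hypothesis $\lambda>2$ and the uniform bound $\rho(I)\ge\delta$ enter. Lemma~\ref{long}, applied to $\rho$, provides for $\rho$-almost every $x$ a nested sequence of intervals $L_k\ni x$, each mapped monotonically by some $f^{n_k}$ onto an interval of $\rho$-measure between $\delta$ and $\lambda\delta$; this is a conservativity statement (recurrence to a fixed macroscopic scale). Combined with the ratio identity above, the average of $h$ over these macroscopic images converges to $h(x)$. Let $M$ denote the essential supremum of $h$ and suppose, for contradiction, that both $\rho(\{h>t\})>0$ and $\rho(\{h\le t\})>0$ for some $t<M$. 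Each superlevel set $\{h>t\}$ is fully invariant modulo $\rho$ (because $h\circ f=h$), hence $f$ maps it into itself. Taking a density point of $\{h>t\}$ provided by Lemma~\ref{density} and using substantial transitivity (available for continuous transitive maps by Lemma~\ref{trans}), the forward images of a small interval on which $\{h>t\}$ has density near $1$ cover all but countably many points of $[0,1]$ while staying, up to an error in $\rho$-measure controlled by the recurrence of Lemma~\ref{long}, inside $\{h>t\}$. This forces $\rho(\{h>t\}^{c})=0$, a contradiction; so $h$ must equal a constant $c$ for $\rho$-almost every point. Making this last transport argument quantitatively correct (controlling the distortion and overlaps of the forward images) is the genuinely technical point, and is precisely why one needs both conservativity from Lemma~\ref{long} and transitivity together.

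Finally I would extract the contradiction from $h\equiv c$. The identity means $\nu=c\rho=c(\mu+\nu)$, that is, $(1-c)\nu=c\mu$. If $c=1$ then $\mu=0$, which is impossible; if $c\in[0,1)$ then $\nu=\frac{c}{1-c}\,\mu$. But $\nu$ is a probability measure while $\mu$ is infinite, so necessarily $c=0$ and hence $\nu=0$. This contradicts $\nu([0,1])=1$, and therefore no probability measure $\nu\in\M$ with $T_f\nu=\lambda\nu$ can exist.
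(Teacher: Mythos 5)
Your overall strategy is the paper's strategy: form the combined measure $\rho=\mu+\nu$, take the Radon--Nikodym derivative of $\nu$ against it, prove that this derivative is constant along orbits, and then use a density point (Lemma~\ref{density}) together with Lemma~\ref{long} and substantial transitivity to show that an invariant set of positive measure must exhaust a space of infinite $\rho$-measure. The paper proves invariance by a change-of-variables integral identity rather than by differentiation (both work; your version needs the small observation that $f$-preimages of $\rho$-null sets are $\rho$-null, which follows from the eigenvector equation), and it skips the detour through full ergodicity: it fixes a single superlevel set $E$ of the derivative, which automatically has positive \emph{finite} measure, shows $E$ has full measure, and contradicts $\rho([0,1])=\infty$ directly. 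Your ending via $\nu=c\rho$ is a valid alternative, but it buys nothing, since proving $h$ constant requires the very same transport argument.

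The genuine gap is in the step you yourself flag as the obstacle, and it is not merely technical bookkeeping: as sketched, the mechanism would fail. You propose to push forward a small interval on which $\{h>t\}$ has density near $1$ and to keep the error ``controlled by the recurrence of Lemma~\ref{long}.'' But under forward iteration each monotone branch multiplies $\rho$-measure by $\lambda$ and the branches are summed, so a positive density defect on the small interval is not controlled after the many iterations needed to cover $[0,1]$. The correct order of operations is the reverse. First upgrade ``density near $1$ on the shrinking intervals $L_k$'' to ``density exactly $1$ on a fixed nondegenerate interval $L$'': trim so that $\rho(f^{n_k}(L_k))=\delta$, pass to a subsequence so that the endpoints of $f^{n_k}(L_k)$ converge to points $a,b$, and rule out $a=b$. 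This last point is where you must know that $\rho(\{h>t\})<\infty$ --- true for $t>0$ because $\nu(\{h>t\})\ge t\,\rho(\{h>t\})$ and $\nu$ is a probability measure, but you never record it --- since otherwise one cannot invoke continuity of measure to produce the contradictory atom at $a$. Only after one has $\rho(L\setminus\{h>t\})=0$ on the macroscopic interval $L$ does substantial transitivity finish the job, because a $\rho$-null set stays $\rho$-null under forward images: $\rho(f(A))\le(T_f\rho)(A)=\lambda\rho(A)$. Without this intermediate interval of \emph{full} (not merely near-full) density, the transport step does not close.
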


\begin{proof}
Suppose that such measure $\nu$ exists. The measure $\mu + \nu$ is an
infinite measure in $\M$ such that
$T_f(\mu+\nu)=\lambda(\mu+\nu)$ and for every $P$-basic interval
$I$ we have $\delta \leq (\mu+\nu)(I) < \infty$. Replace $\mu$ by
$\mu + \nu$ if necessary to obtain absolute continuity of $\nu$ with
respect to $\mu$. Now take the Radon-Nikodym derivative, $\xi =
\frac{d\nu}{d\mu}$ and write $d\nu=\xi d\mu$. Integrate the function $\xi
\circ f$ over any Borel set $A$ contained in any $P$-basic interval
$I$:
\begin{multline*}
\int_A \xi \circ f\;d\mu|_I = \int_{f(A)} \xi \; d f_*(\mu|_I) =
\frac{1}{\lambda}\int_{f(A)} \xi \; d\mu|_{f(I)} =
\frac{1}{\lambda} \int_{f(A)} d\nu|_{f(I)}\\
= \int_{f(A)} df_*(\nu|_I)= \int_A 1 \circ f \; d\nu|_I
=\int_A d\nu|_I= \int_A \xi \; d\mu|_I.
\end{multline*}
This shows that $\xi \circ f = \xi$ $\mu$ almost everywhere; that is,
that up to a set of measure $\mu$ zero, the function $\xi$ is constant
along the orbits of $f$.

By the definition of the Radon-Nikodym derivative, $\int_0^1 \xi \;
d\mu=\nu([0,1])=1$. Therefore there exists a positive real number
$\eps$ such that the measurable set $E = \xi^{-1}([\eps,\infty))$ has
positive $\mu$ measure. This measure cannot be infinite, because then
$\int_0^1 \xi \; d\mu$ would be infinite. Thus, $0 < \mu(E) < \infty$.
Because $\xi$ is constant along orbits, this set $E$ is fully
invariant; that is, $f^{-1}(E) = E$. While this is $\mu$ almost
everywhere, we can modify $E$ by adding/subtracting a set of
$\mu$ measure zero so that it holds everywhere.

The plan of the rest of the proof is as follows. We use
Lemma~\ref{density} to get high density of $E$ in a small interval,
then Lemma~\ref{long} to transport it to a long interval, and then
substantial transitivity to transport it to the whole space. By the
invariance of $E$ this construction gives us infinite measure of $E$,
which is impossible.

There is a point $x\in E$ which satisfies conclusions of both
Lemmas~\ref{long} and~\ref{density} (with $A=E$). In particular, there
exist a sequence of times $n_k$ and a sequence of intervals $L_k =
[a_k, b_k]$ containing $x$ such that $f^{n_k}$ is monotone on
$L_k$ and such that $\mu(f^{n_k}(L_k)) \geq \delta$. Trimming
the intervals $L_k$ (but keeping $x\in L_k$), we can achieve
equality $\mu(f^{n_k}(L_k)) = \delta$. Therefore $\mu(L_k) =
\lambda^{-n_k}\delta$, and this decreases to zero. But every
neighborhood (both two-sided and one-sided) of $x$ has positive
$\mu$ measure. Therefore, $a_k \to x$ and $b_k \to x$ as $k
\to \infty$. Now we may use the fact that $x$ is a density point of
$E$ to conclude that $\mu(E \cap L_k)/\mu(L_k) \to 1$.

Next, we show the density of $E$ in an interval at the large scale. By
compactness, after passing to subsequences we may assume that
$f^{n_k}(a_k)$ converges to some point $a\in[0,1]$ and
$f^{n_k}(b_k)$ converges to some point $b\in[0,1]$. Let $L$ denote
the interval $[a,b]$. We claim that the points $a$ and $b$ are
distinct. Indeed, if $a=b$, then for each $j$ take the interval
$\big(a-\frac{1}{j},a+\frac{1}{j}\big)$. It contains infinitely many
intervals $f^{n_k}(L_k)$ and $\mu(E \cap f^{n_k}(L_k))$ is
approaching $\delta$, so the measure of $E \cap\big(a - \frac{1}{j},a
+ \frac{1}{j}\big)$ is at least $\delta$. Since the measure of $E$ is
finite, we may send $j$ to infinity and find by the continuity of
measure that there is an atom at $a$, which is a contradiction. Thus
$a \neq b$. As $k$ grows, the endpoints $f^{n_k}(a_k),
f^{n_k}(b_k)$ of $f^{n_k}(L_k)$ eventually draw nearer to the
respective endpoints $a, b$ of $L$ than half the distance between $a$
and $b$. Therefore, for sufficiently large $k$, the symmetric
difference $f^{n_k}(L_k) \bigtriangleup L$ consist of two
intervals; one with endpoints $a, f^{n_k}(a_k)$, and the other
with endpoints $b, f^{n_k}(b_k)$. Again by the continuity of
measure, each of these intervals has $\mu$ measure converging to zero.
Therefore $\mu(L \bigtriangleup f^{n_k}(L_k)) \to 0$ as $k \to
\infty$. Together with the invariance of $E$ and the monotonicity of
$f^{n_k}$ on $L_k$, this shows that
\begin{multline*}
\frac{\mu(E \cap L)}{\mu(L)}
\geq \lim_{k\to\infty} \frac{\mu(E \cap f^{n_k}(L_k)) -
  \mu(f^{n_k}(L_k) \bigtriangleup L)}{\mu(f^{n_k}(L_k)) +
  \mu(f^{n_k}(L_k) \bigtriangleup L)}
= \lim_{k\to\infty} \frac{\mu(E \cap f^{n_k}(L_k))}
{\mu(f^{n_k}(L_k))} \\ \\
= \lim_{k\to\infty} \frac{\mu(f^{n_k}( E \cap L_k ))}
{\mu(f^{n_k}(L_k))}
= \lim_{k\to\infty} \frac{\lambda^{n_k} \cdot \mu(E \cap
  L_k)}{\lambda^{n_k} \cdot \mu(L_k)} = 1.
\end{multline*}
Therefore $E \cap L$ has full measure in the interval $L$; that is,
$\mu(L \setminus E) = 0$.

If the invariant set $E$ fills $L$, then it must also fill all the
images $f^n(L)$, $n\in\mathbb{N}$. Indeed, $\mu(f(L)\setminus
E)=\mu(f(L\setminus E))$ by the $f$-invariance of $E$. However,
\[
\mu\big(f(L\setminus E)\big) \leq \sum_{I \in \B(P)} \mu\big(f(I \cap
(L \setminus E))\big) = (T_f\mu)(L \setminus E) = \lambda \mu(L
\setminus E) = 0.
\]
This shows that $\mu(f(L) \setminus E) = 0$, and it follows
inductively that $\mu(f^n(L) \setminus E) = 0$ for all
$n\in\mathbb{N}$.

The interval $L$ has nonempty interior, so by substantial transitivity
of $f$ the set $\bigcup_{n=0}^{\infty} f^n(L)$ excludes at most
countably many points of $[0,1]$, and hence has full $\mu$ measure in
$[0,1]$. But $E$ has full measure in $\bigcup_{n=0}^{\infty} f^n(L)$, and
therefore $E$ has full measure in $[0,1]$. This is a contradiction
because $E$ has finite $\mu$ measure, while by the assumption,
$\mu([0,1])=\infty$. Therefore it is impossible for a probability
measure $\nu\in\M$ to satisfy $T_f\nu=\lambda\nu$.
\end{proof}

\section{Liftings of circle maps}\label{lift}

While Theorem~\ref{maint}, together with Theorem~\ref{semiconj},
provides sufficient conditions for a map $f\in\C$ to have no
semiconjugacy to a map of constant slope, it is not immediately
obvious how to construct concrete examples. In particular, even if we
use those theorems to exclude semiconjugacy to a map with a given
slope $\lambda$, how can we exclude other $\lambda$'s? In this section we
provide tools to do it for a large class of maps. Those maps
additionally will be continuous (formally, they will be restrictions
of continuous maps to $[0,1]\setminus P$). We denote the circle
$\R/\Z$ by $\Sone$.

\begin{theorem}\label{main}
Assume that $f:\Sone\to\Sone$ is a continuous degree one map that is
piecewise monotone with finitely many pieces and has constant
slope $\lambda > 1$. Assume also that $f$ has a lifting $F:\R\to\R$
that is topologically transitive. Take any continuous interval map
$g:[0,1]\to[0,1]$ such that $g|_{(0,1)}$ is topologically conjugate to
$F$. Then there does not exist any nondecreasing semiconjugacy of $g$
to an interval map of constant slope.
\end{theorem}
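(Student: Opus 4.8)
The plan is to combine Theorem~\ref{semiconj} with Theorem~\ref{maint}, transporting everything to the transitive lift $F$, where the constant slope $\lambda$ actually lives. By Theorem~\ref{semiconj} (in the form of Remark~\ref{se-cont}), it suffices to show that for \emph{no} $\lambda'>0$ does $T_g$ admit a probability measure $\nu\in\M$ with $T_g\nu=\lambda'\nu$. First I would record substantial transitivity of $g$: fixing a conjugacy $\psi\colon(0,1)\to\R$ between $g|_{(0,1)}$ and $F$, Remark~\ref{trans-R} gives $\bigcup_{n\ge0}F^n(W)=\R$ for every nonempty open $W\subset\R$, whence $\bigcup_{n\ge0}g^n(U)\supseteq(0,1)$ for every nonempty open $U\subseteq[0,1]$, so that at most the two endpoints are excluded.

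Next I would build the distinguished \emph{infinite} eigenmeasure at the slope $\lambda$. Since $F$ has constant slope $\lambda$, Remark~\ref{Leb-R} gives $T_F m=\lambda m$ for Lebesgue measure $m$ on $\R$; pulling $m$ back through $\psi$ produces a nonatomic measure $\mu=\psi^* m\in\M$ on $[0,1]$ with $\mu(\{0,1\})=0$ and $T_g\mu=\lambda\mu$, exactly as in the commuting-diagram computation of Theorem~\ref{semiconj}. The crucial point is the lower bound: the $F$-basic intervals are the integer translates of the finitely many monotonicity pieces of $F$ in one fundamental domain, so their lengths take finitely many positive values, and hence the corresponding $g$-basic intervals $I$ satisfy $\mu(I)\ge\delta$ for some $\delta>0$, while $\mu([0,1])=m(\R)=\infty$. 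I emphasize that this lower bound \emph{survives iteration}: $f^N$ is again a continuous finite-piece circle map of constant slope $\lambda^N$, so the $(g^N)$-basic intervals are again translates of finitely many arcs and still have $\mu$-measure bounded below. This is exactly the feature that the warning preceding Lemma~\ref{long} says can fail for a general map, and it is what lets us raise the slope above $2$.

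To exclude the slope $\lambda'=\lambda$ I would invoke Theorem~\ref{maint} with this $\mu$; for $\lambda>2$ it applies verbatim. For $1<\lambda\le2$ I would rerun its proof for a power $g^N$ with $\lambda^N>2$. Suppose $\nu$ is a probability eigenmeasure at $\lambda$; after replacing $\mu$ by $\mu+\nu$ (which keeps it infinite and bounded below) we may assume $\nu\ll\mu$, and the branchwise computation of Theorem~\ref{maint} applied to $g$ itself (equal eigenvalues $\lambda$) gives $\xi\circ g=\xi$ for $\xi=d\nu/d\mu$. Hence the superlevel set $E=\{\xi\ge\eps\}$, of finite positive $\mu$-measure for suitable $\eps$, is $g$-invariant. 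Since $\lambda^N>2$ and the $(g^N)$-basic intervals are bounded below, Lemma~\ref{long} for $g^N$ together with Lemma~\ref{density} fills an interval $L$ with $\mu(L\setminus E)=0$; the $g$-invariance of $E$ and the substantial transitivity of $g$ (not of $g^N$) then force $E$ to have full measure in $[0,1]$, contradicting $\mu([0,1])=\infty$.

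The main obstacle is excluding every \emph{other} slope $\lambda'\ne\lambda$, and here the circle structure is decisive. Given a probability measure $\nu$ with $T_g\nu=\lambda'\nu$, transport $\nu|_{(0,1)}$ by $\psi$ to a finite measure $\tilde\nu$ on $\R$ with $T_F\tilde\nu=\lambda'\tilde\nu$, and periodize: $\hat\nu=\sum_{k\in\Z}\tau^k_*\tilde\nu$, where $\tau(x)=x+1$, is invariant under integer translations because $F\circ\tau=\tau\circ F$, has $\hat\nu([0,1))=\tilde\nu(\R)=1$, and satisfies $T_F\hat\nu=\lambda'\hat\nu$; it therefore descends to a nonatomic probability measure $\bar\nu$ on $\Sone$ with $T_f\bar\nu=\lambda'\bar\nu$. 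Now an eigenmeasure of $T_f$ on the circle is precisely a measure satisfying $\bar\nu(f(A))=\lambda'\bar\nu(A)$ on each branch $A$, i.e. a conformal measure for the locally expanding map $f$, and for a transitive map with finitely many pieces the admissible conformal factor is unique and equal to $e^{h_{\mathrm{top}}(f)}$, which for a constant-slope map is $\lambda$. Hence $\lambda'=\lambda$, contradicting the previous paragraph. I expect the delicate point to be exactly this last identification---that transitivity forces the conformal factor to be the spectral radius $e^{h_{\mathrm{top}}(f)}=\lambda$ and rules out factors $\lambda'\ne\lambda$---since it is the one step genuinely outside the elementary measure-theoretic framework built so far; it should be argued with care, e.g. by comparing $\int N_n\,d\bar\nu=(\lambda')^{n}$ with $\int N_n\,dm=\lambda^{n}$ for the preimage-counting functions $N_n(s)=\#(f^{n})^{-1}(s)$, using transitivity to conclude that the two exponential rates coincide.
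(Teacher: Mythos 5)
Your overall architecture coincides with the paper's: construct the infinite eigenmeasure $\mu=\psi^*m$ whose basic intervals have measure bounded below, apply Theorem~\ref{maint} to kill the eigenvalue $\lambda$, and descend a hypothetical probability eigenmeasure to the circle by periodization in order to pin down which eigenvalues can occur. Your periodization $\hat\nu=\sum_k\tau^k_*\tilde\nu$ is literally the paper's pushforward $\pi_*\nu_F$, and your way of handling $1<\lambda\le2$ --- keeping $g$ and its substantial transitivity but running Lemma~\ref{long} for $g^N$ with $\lambda^N>2$, using that the $(g^N)$-basic intervals are still translates of finitely many arcs --- is a legitimate variant of the paper's device of replacing $f$, $F$, $g$ by iterates at the outset (the paper must then check that $F^n$ remains transitive, via Block--Coppel).

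The genuine gap is the exclusion of $\lambda'\ne\lambda$. You reduce it to the assertion that a transitive, finitely piecewise monotone circle map admits a nonatomic probability measure of constant Jacobian only for the factor $e^{h_{\mathrm{top}}(f)}=\lambda$, and you propose to prove this by comparing $\int N_n\,d\bar\nu=(\lambda')^n$ with $\int N_n\,dm=\lambda^n$, where $N_n(s)=\#(f^n)^{-1}(s)$. That comparison yields only one inequality: since $N_n\le c_n$ (the lap number of $f^n$) and $\frac1n\log c_n\to h_{\mathrm{top}}(f)=\log\lambda$ by Misiurewicz--Szlenk, you get $\lambda'\le\lambda$. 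The reverse inequality does not follow from transitivity alone: $N_n$ need not grow at rate $\lambda$ everywhere (a point whose unique preimage is itself has $N_n\equiv1$ for all $n$, and transitivity by itself does not preclude regions of slow preimage growth), so a priori $\bar\nu$ could be carried by such a region. You flag this yourself, but it is precisely the step that needs a proof, and the tools are already in the paper: apply Theorem~\ref{semiconj} in its circle version (Remarks~\ref{se-cont} and~\ref{se-circle}) to the eigenmeasure $\bar\nu$ to obtain a nondecreasing semiconjugacy of $f$ to a circle map of constant slope $\lambda'$; since $f$ is transitive, this semiconjugacy is automatically a conjugacy; and since the topological entropy of a constant slope circle map is the logarithm of the slope, conjugacy-invariance of entropy forces $\lambda'=\lambda$. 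Substituting that argument for your conformal-rigidity claim closes the gap.
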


\begin{proof}
Fix $n$ sufficiently large so that $\lambda^n>2$ and consider the iterates
$g^n$, $f^n$, and $F^n$. The map $g^n|_{(0,1)}$ is conjugate to $F^n$ and $F^n$
is a lifting of the degree one circle map $f^n$ of constant slope $\lambda^n$.
A priori, a transitive map need not have transitive iterates. But $F$ is a
transitive map on the real line, and therefore either all iterates of $F$ are
transitive, or else there exists a point $y\in\R$ such that $F((-\infty,y])=
[y,\infty)$ and $F([y,\infty))=(-\infty,y]$ (see \cite[pgs 156-7]{BlockCoppel}
-- the statements are for interval maps but the proofs also hold in $\R$).  This
latter alternative is impossible for a lifting of a degree one circle map. Therefore
$F^n$ is transitive. If there exists any nondecreasing map that semiconjugates $g$
with a constant slope interval map, then the same map also conjugates $g^n$ with a
constant slope interval map. This shows that after replacing $g$, $f$, and $F$
by some suitably high iterates, we may assume that $\lambda>2$.

Let $h:(0,1)\to\R$ denote the homeomorphism that
conjugates $g|_{(0,1)}$ with $F$. Let $\pi:\R\to\Sone$, denote the
natural projection (that semiconjugates $F$ with $f$). Moreover, let
$s:\R \to \R$, given by $s(x)=x+1$, denote the deck transformation;
then $s \circ F = F \circ s$. Thus, the following diagrams commute.
\begin{equation*}
\begin{CD}
(0,1) @>g>> (0,1)\\
@VhVV @VVhV\\
\R @>>F> \R
\end{CD}
\qquad
\begin{CD}
\R @>F>> \R\\
@V{\pi}VV @VV{\pi}V\\
\Sone @>>f> \Sone
\end{CD}
\qquad
\begin{CD}
\R @>F>> \R\\
@VsVV @VVsV\\
\R @>>F> \R
\end{CD}
\end{equation*}

The hypothesis of piecewise monotonicity means that there
exists a finite set $P \subset \Sone$ such that $f$ is
monotone on each $P$-basic arc, and because of the constant slope, that monotonicity is strict. In the circle, strict monotonicity does not
guarantee injectivity, but after adjoining the finite set $f^{-1}(x)$
for some $x\in\Sone$ to $P$ we have also injectivity of $f$ on
each $P$-basic arc, so that the restriction of $f$ to any $P$-basic
arc is then a homeomorphism onto its image. Just as for interval maps
we define the operator $T_f$ acting on the space of nonatomic, strongly $\sigma$-finite, Borel
measures on the circle. Let $\PF = \pi^{-1}(P)$. Then $\PF$ is a
closed, countable set, invariant under the integer translation map
$s$, and $F$ is strictly monotone on each $\PF$-basic interval. Let
$\Pg = h^{-1}(\PF) \cup \{ 0, 1 \}$. Then $g\in\CC$ and $\Pg$ is a
$g$-admissible set.

Suppose that there is a nondecreasing semiconjugacy of $g$ to a
constant slope interval map, say, with slope $\lambda'$. Then by
Theorem~\ref{semiconj} there exists a probability measure $\nu_g\in\M$
such that $T_g \nu_g = \lambda' \nu_g$. Push this measure down to a
measure $\nu_F = h_*(\nu_g)$ on the real line. Then $h$ gives not only
a topological conjugacy, but also a measure-theoretic isomorphism of
$((0,1),g,\nu_g)$ with $(\R,F,\nu_F)$. It follows that
$T_F\nu_F = \lambda' \nu_F$.

Now push this measure down to the circle, defining $\nu_f = \pi_* \nu_F$.
If $A$ is a Borel subset of a $P$-basic arc in $\Sone$, then its
preimage in the covering space $\R$ can be expressed as a disjoint union
$\pi^{-1}(A)=\bigcup_{n=-\infty}^\infty s^n(B)$ in such a way that $B$ is
a subset of a $P_F$-basic interval. Then for each $n\in\N$, $s^n(B)$
is also a subset of a $P_F$-basic interval, because $P_F$ is
$s$-invariant. By the injectivity of $f$ on each $P$-basic arc, it
follows that the sets $F(s^n(B)), n\in\N,$ are pairwise disjoint. Therefore
$\pi^{-1}(f(A))=\bigcup_{n=-\infty}^\infty F(s^n(B))$ is also a disjoint
union. Now we can calculate
\begin{multline*}
T_f(\nu_f)(A)=\nu_f(f(A))=\nu_F(\pi^{-1}(f(A)))=
\nu_F\left(\bigcup_{n=-\infty}^\infty F(s^n(B))\right)=\\
=\sum_{n=-\infty}^\infty \nu_F(F(s^n(B)))
=\sum_{n=-\infty}^\infty \lambda'\nu_F(s^n(B))
=\lambda'\nu_F\left(\bigcup_{n=-\infty}^\infty s^n(B)\right)=\lambda'\nu_f(A).
\end{multline*}
It follows that $T_f(\nu_f) = \lambda' \nu_f$.

By Theorem~\ref{semiconj} and Remarks~\ref{se-cont}
and~\ref{se-circle}, there is a nondecreasing semiconjugacy of $f$ to
a circle map of constant slope $\lambda'$. But $f$, being a factor of
a transitive map, is itself transitive. Therefore a nondecreasing
semiconjugacy is automatically a conjugacy (see~\cite{ALM}). Thus, $f$
is conjugate to a circle map of constant slope $\lambda'$. The
topological entropy of a constant slope circle map is the logarithm of
the slope (see~\cite{MS}, \cite{ALM}), and topological entropy is a
conjugacy invariant. In such a way
we have shown that if there exists a nondecreasing semiconjugacy of $g$
to an interval map of constant slope $\lambda'$, then in fact
$\lambda'=\lambda$, the constant slope of $f$.

Let us push the Lebesgue measure $m$ on $\R$ via the homeomorphism
$h^{-1}$, and denote $\mu=(h^{-1})_*(m)$. The $\mu$ measures of
$\Pg$-basic intervals are the same as $m$ measures of corresponding
$\PF$-basic intervals. Since $F$ is a lifting of a piecewise monotone
map with finite number of pieces, those measures take only finitely
many values, all of them finite. Moreover $F$, as the lifting of a map
of constant slope, has also constant slope. By Lemma~\ref{Leb} and
Remark~\ref{Leb-R}, the Lebesgue measure $m$ satisfies $T_F m=\lambda
m$. Since $h$ is a measure-theoretic ismorphism of $((0,1),g,\mu)$ with $(\R,F,m)$ it follows that $T_g\mu=\lambda\mu$. 
Substantial transitivity of $g$ follows from transitivity of $F$
and Lemma~\ref{trans}. All this shows that $\mu$ belongs to $\M$ and that $g$ and $\mu$
satisfy the assumptions of Theorem~\ref{maint}. Therefore there is
no probability measure $\nu\in\M$ with $T_g\nu=\lambda\nu$, and
consequently, there is no nondecreasing semiconjugacy of $g$ to a
constant slope interval map.
\end{proof}

\begin{remark}\label{nonvacuous}
In Theorem~\ref{main} there is no difficulty in finding a continuous interval map 
$g:[0,1]\to[0,1]$ such that $g|_{(0,1)}$ is topologically conjugate
to $F$. Let $h$ denote any homeomorphism of $(0,1)$ with $\R$ and define
$g=h^{-1}\circ F\circ h$ with additional fixed points at $0$ and $1$. We
obtain continuity of $g$ at the points $0, 1$, because $F$ was assumed to
be the lifting of a degree one circle map.
\end{remark}

\begin{remark}\label{nontrivial}
There are trivial examples of maps with zero topological entropy for which
there is no semiconjugacy to a map of constant slope, for instance, the map
$f:[0,1]\to[0,1]$ given by $f(x)=x^2$. Theorem~\ref{main} is nontrivial in
that it applies to continuous and transitive interval maps, and by
\cite{BlockCoven} \cite{Blokh}, such maps always have topological entropy at
least $\log\sqrt2$.
\end{remark}

If we wish to construct explicit examples that satisfy the hypotheses
of Theorem~\ref{main}, the only possible difficulty is in verifying the
transitivity of the lifting $F$.  Fortunately, there is a simple
condition, broadly applicable and easy to verify, that
guarantees transitivity.

\begin{theorem}\label{checkTrans}
Assume that $f:\Sone\to\Sone$ is a continuous degree one map that is
piecewise monotone with finitely many pieces and has constant
slope. Assume also that $F:\R\to\R$ is a lifting of $f$.
Let $P$ denote the set of turning points of $F$.  If for each $P$-basic
interval $I$ there are points $x_L,x_R$ in the closure of $I$ such
that $F(x_L)=x_L-1$
and $F(x_R)=x_R+1$, then $F$ is topologically transitive.
\end{theorem}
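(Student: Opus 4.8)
The plan is to exploit a strong ``overflowing'' property of $F$ on each basic interval, and then to manufacture orbits escaping to $\pm\infty$ directly out of the hypothesis. First I would record the structural facts. Since $F$ is a lifting, $F(x+1)=F(x)+1$, so the turning-point set $P$ is invariant under integer translation and has finitely many points per period; consequently every $P$-basic interval has length strictly less than $1$ (an interval of length $\geq 1$ would contain an integer translate of a turning point in its interior, or else force slope $1$, which is incompatible with the existence of points $x_L,x_R$ with $F(x_L)=x_L-1$ and $F(x_R)=x_R+1$). On the closure $[a,b]$ of a basic interval $I$ the map $F$ is continuous and affine with slope $\pm\lambda$, so $F([a,b])$ is a genuine closed interval. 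Comparing $|F(x_R)-F(x_L)|=\lambda|x_R-x_L|<\lambda$ with $F(x_R)-F(x_L)=(x_R-x_L)+2\in(1,3)$ gives $\lambda>1$; applying the hypothesis to a piece of positive slope (one exists because the net increase of $F$ over a period equals $+1$) even forces $\lambda>2$, a fact I will use for the expansion estimates below.

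The geometric heart of the argument is the \emph{overflow property}. Since $x_L,x_R\in[a,b]$ and $F([a,b])$ is an interval containing $F(x_L)=x_L-1$ and $F(x_R)=x_R+1$, I get
\[
F([a,b])\supseteq[x_L-1,\,x_R+1]\supseteq[b-1,\,a+1]\supseteq[a,b],
\]
and this containing interval has length $2-(b-a)>1$. I would also record the elementary fact that \emph{any open interval of length greater than $1$ contains the closure of some basic interval}: a fundamental half-open subinterval always meets $P$ in exactly the per-period number of points ($\geq 2$), and the two smallest of these bound a full basic interval lying inside. Hence for every basic interval $I$ the image $F([a,b])$ contains both $[a,b]$ itself and the closure of at least one basic interval.

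Next I would prove the core statement that $\bigcup_{n\geq 0}F^{n}([a,b])=\R$ for every basic interval. Because $[a,b]\subseteq F([a,b])$, the images $W_n:=F^{n}([a,b])$ form a nested increasing sequence of closed intervals, so their union $O$ is an interval with $F(O)=O$ (in particular $O$ is forward invariant), and $O\supseteq W_1$ contains the closure $[c,d]$ of some basic interval $J$. This is where the hypothesis pays off twice: inside $[c,d]$ there is a point $x_R$ with $F(x_R)=x_R+1$, and since $F(y+1)=F(y)+1$ its orbit is $F^{n}(x_R)=x_R+n\to+\infty$; by forward invariance every $x_R+n$ lies in $O$, so $O$ is unbounded above. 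Symmetrically the point $x_L\in[c,d]$ with $F(x_L)=x_L-1$ has orbit $x_L-n\to-\infty$ inside $O$, so $O$ is unbounded below. An interval unbounded in both directions is all of $\R$, giving $\bigcup_n F^{n}([a,b])=\R$.

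Finally I would pass from an arbitrary nonempty open set $U$ to transitivity. Shrinking $U$, I may assume $U$ is an open interval inside a single basic interval; while its iterates stay inside a single basic interval their lengths multiply by $\lambda>1$, and once an iterate contains exactly one turning point a further application multiplies its length by at least $\lambda/2>1$. Since an interval meeting at most one turning point has length below twice the maximal basic-interval length, after finitely many steps some $F^{m}(U)$ must contain two turning points, hence the closure of a basic interval; combined with the previous paragraph this yields $\bigcup_n F^{n}(U)=\R$. Thus for any nonempty open $U,V$ some $F^{n}(U)$ meets $V$, which is topological transitivity (a dense orbit follows by a Baire-category argument over a countable base). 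The main obstacle I anticipate is the bookkeeping in this last step—verifying that the iterates of $U$ genuinely engulf a whole basic interval despite folding; the decisive idea that makes the rest routine is the use of the hypothesized points $x_L,x_R$ as seeds of orbits drifting to $\pm\infty$, which forces the invariant interval $O$ to fill the whole line.
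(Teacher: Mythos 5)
Your proof is correct and follows essentially the same route as the paper's: derive $\lambda>2$ from the hypothesis together with the constant slope, grow an arbitrary open interval by a factor of at least $\lambda/2$ per step until some image contains the closure of a $P$-basic interval, and then use the points $x_L,x_R$ supplied by the hypothesis to make the successive images sweep out all of $\R$. Your extra scaffolding (the nested forward-invariant interval $O$ and the escaping orbits $x_R+n$, $x_L-n$) is just a slightly more elaborate packaging of the paper's direct observation that $F^{N+k}(U)\supseteq(x_L-k,\,x_R+k)$ for all $k$.
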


\begin{proof}
The sets $R:=\left\{x\in\R:F(x)-x=1\right\}$ and $L:=\left\{x\in\R:F(x)-x
=-1\right\}$ are both invariant under integer translations and are both
nonempty by hypothesis. Choose a point $x_L\in L$ and let $x_R$ be the
smallest element of $R$ that is larger than $x_L$. Then $x_R-x_L<1$ and
$F(x_R)-F(x_L)>2$. Since $F$ has constant slope $\lambda$, this shows that
$\lambda > 2$.

Let $U\subset\R$ be any open interval. As $n$ grows, the successive
images $F^n(U)$ grow in length by a factor at least $\lambda/2>1$
until some image $F^N(U)$ contains an entire $P$-basic interval. Within the closure of this
$P$-basic interval there are points $x_L\in L$, $x_R\in R$. Then, in the
next steps, $F^{N+1}(U)$ contains $(x_L-1,x_R+1)$, $F^{N+2}(U)$ contains
$(x_L-2,x_r+2)$, and so on. Therefore the union of all images of $U$ is all
of $\R$, and this proves transitivity of $F$.
\end{proof}

\begin{remark}\label{graph}
We can immediately verify the hypothesis of Theorem~\ref{checkTrans} by
superimposing the diagonal lines $y=x+1$, $y=x-1$ on the graph $y=F(x)$.
Each piece of monotonicity of the graph of $F$ should intersect both diagonal lines.
\end{remark}

\section{Examples}\label{examp}

In this section we provide a concrete example of a one-parameter family of circle
maps of degree one with transitive liftings and constant slope.
In such a way we will have examples where our theorems apply
and there is no nondecreasing semiconjugacy to a map of a constant slope.

Let us describe a lifting $F_\lambda$ in our family. Choose a real parameter
$\lambda\geq2+\sqrt5$. Let $F_\lambda$ be the ``connect the dots'' map (the graph
of $F_\lambda$ consists of straight line segments connecting the dots) with the
dots $(k,k-1)$ and $(k+b,k+c)$, where $k\in\Z$, $b=(\lambda+1)/2\lambda$,
and $c=(\lambda-1)/2$ (see Figure~\ref{fig_ex}). On the interval
$[k,k+b]$ the slope is $(c+1)/b=
\lambda$, and on $[k+b,k+1]$ it is $-c/(1-b)=-\lambda$, so the map has constant
slope $\lambda$. We have
\[
F_\lambda(k+b)-(k+b)-1=\frac{\lambda^2-4\lambda-1}{2\lambda}=
\frac{\left(\lambda-\left(2+\sqrt5\right)\right)
\left(\lambda-\left(2-\sqrt5\right)\right)}{2\lambda}\ge 0,
\]
and therefore $F_\lambda(k+b)-(k+b)\geq1$. Moreover, $F_\lambda(k)-k=-1$, so by the
Intermediate Value Theorem the assumptions of Theorem~\ref{checkTrans}
are satisfied. Thus, $F_\lambda$ is topologically transitive.
Now if we choose any homeomorphism $h:(0,1)\to\R$, we will get a map
$g_\lambda=h^{-1}\circ F_\lambda\circ h$ (with additional fixed points at 0 and 1),
which belongs to $\CC$, but is not semiconjugate by a nondecreasing map to a map of constant
slope. If we want really concrete examples, we can even specify $h$,
for instance $h(x)=\ln(y/(1-y))$ (then $h^{-1}(x)=e^x/(e^x+1)$).

\begin{figure}[ht]
\includegraphics[
height=4in]
{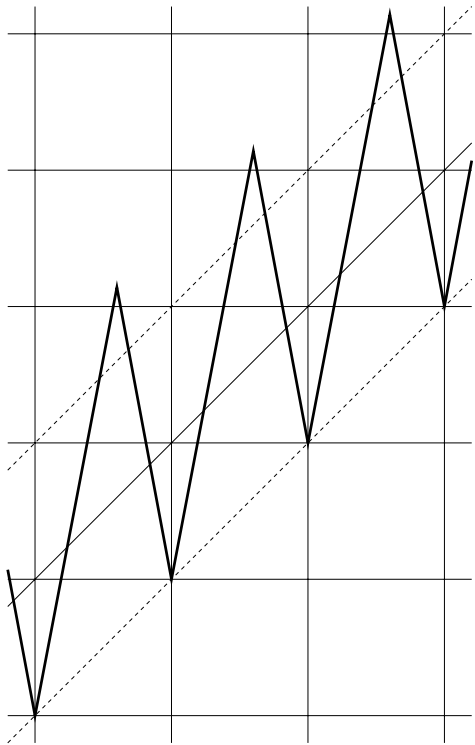}
\caption{The map $F_\lambda$ with $\lambda=5.28$.}\label{fig_ex}
\end{figure}

We would like to have in our family both maps that are and are not
Markov. Remember that ``Markov'' means countably Markov, so
$F_\lambda$ being Markov means that for the corresponding circle map
the trajectory of the local maximum has countable closure (the local
minimum is always a fixed point). Of course $F_\lambda$ is Markov if
and only if the map $g_\lambda$ is Markov.

We start with a lemma on the one-sided full 2-shift
$\sigma:\Sigma\to\Sigma$, where $\Sigma=\{0,1\}^\N$.

\begin{lemma}\label{shift}
Let $D$ be the set of those points $s\in\Sigma$ for which the closure
of the trajectory $\{\sigma^n(s)\}_{n=0}^\infty$ is countable. Then
both sets $D$ and $\Sigma\setminus D$ are uncountable.
\end{lemma}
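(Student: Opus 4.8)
The plan is to exhibit, for each of $D$ and $\Sigma\setminus D$, an explicit uncountable family of sequences lying in it. The two families are built from opposite combinatorial principles: for $\Sigma\setminus D$ I would use sequences that are as rich as possible (containing every finite word as a subword), and for $D$ sequences that are as sparse as possible (isolated symbols $1$ with gaps tending to infinity). The guiding fact throughout is that the orbit closure $\overline{\{\sigma^n(s)\}}$ equals the set of all $t\in\Sigma$ each of whose prefixes occurs as a subword of $s$.

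For $\Sigma\setminus D$, I would argue as follows. If $s$ contains every finite binary word as a subword, then every prefix of every $t\in\Sigma$ is a subword of $s$, so $\overline{\{\sigma^n(s)\}}=\Sigma$, which is uncountable; hence $s\in\Sigma\setminus D$. To produce uncountably many such $s$, fix an enumeration $w_1,w_2,\dots$ of all finite binary words and, for each $\eta=(\eta_i)\in\{0,1\}^\N$, set $s_\eta=w_1\eta_1 w_2\eta_2 w_3\eta_3\cdots$. Each $s_\eta$ contains every $w_i$, so $s_\eta\in\Sigma\setminus D$; and since the inserted bits $\eta_i$ occupy positions determined solely by the fixed lengths $|w_j|$, reading $s_\eta$ at those positions recovers $\eta$, so $\eta\mapsto s_\eta$ is injective. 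Thus $\Sigma\setminus D$ is uncountable. (A Borel--Cantelli argument giving a full-measure set of such $s$ would also work.)

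For $D$, I would consider sequences $s=0^{a_1}1\,0^{a_2}1\,0^{a_3}1\cdots$, where $(a_i)$ are positive integers with $a_i\to\infty$. The orbit $\{\sigma^n(s)\}$ is countable, so I only need to control its limit points. Here the condition $a_i\to\infty$ is decisive: for each fixed length $\ell$, only finitely many gaps between consecutive symbols $1$ are $\le\ell$, so the only length-$\ell$ words occurring at arbitrarily late positions are $0^\ell$ and $0^j1\,0^{\ell-1-j}$ for $0\le j\le\ell-1$, all of which contain at most one $1$. Consequently every limit point of the orbit has at most one symbol $1$, so the set of limit points lies in the countable set $\{0^\infty\}\cup\{0^j1\,0^\infty:j\ge 0\}$. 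Therefore $\overline{\{\sigma^n(s)\}}$ is the union of two countable sets and is countable, giving $s\in D$. Taking $a_i=2i+\eta_i$ with $\eta\in\{0,1\}^\N$ yields $a_i\to\infty$ for every $\eta$, with distinct $\eta$ producing distinct block lengths and hence distinct $s$; this injects $\{0,1\}^\N$ into $D$, so $D$ is uncountable.

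The main obstacle is the $\omega$-limit computation for the sparse family: the entire argument hinges on the observation that gaps growing without bound force every accumulation point of the orbit to contain at most one symbol $1$, which is exactly what collapses the orbit closure down to a countable set. Once that is in hand, both halves of the statement follow from the two injections above.
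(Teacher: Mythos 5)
Your proof is correct, and both halves go through. The half concerning $D$ is close in spirit to the paper's: the paper uses sequences made of alternating blocks of $0$'s and $1$'s whose $n$-th block has length $n$ or $n+1$, and observes that a window of fixed size slid far to the right sees at most two blocks, so the $\omega$-limit set is countable; your sparse sequences $0^{a_1}1\,0^{a_2}1\cdots$ with $a_i\to\infty$ are the same idea pushed to an extreme, with the bonus that the set of accumulation points is pinned down explicitly as $\{0^\infty\}\cup\{0^j1\,0^\infty: j\ge0\}$. Your key step there --- that gaps tending to infinity force every accumulation point to contain at most one $1$ --- is sound, since an accumulation point's prefixes must occur at arbitrarily late positions. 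Where you genuinely diverge is the half concerning $\Sigma\setminus D$: the paper invokes transitivity of the shift plus a Baire category argument (a countable dense $G_\delta$ would force two disjoint dense $G_\delta$ sets, contradicting the Baire category theorem) to conclude that the set of points with dense orbit is uncountable, whereas you construct an explicit uncountable family $s_\eta=w_1\eta_1 w_2\eta_2\cdots$ of sequences containing every finite word, with injectivity read off from the fixed positions of the inserted bits. Your route is more hands-on and self-contained; the paper's is shorter and generalizes immediately to any transitive system on a perfect Polish space without needing to build anything. Both are complete proofs.
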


\begin{proof}
Each element of $\Sigma$ is a 0-1 sequence. Let $E$ be the set of
those sequences that are built of alternating blocks of 0's and 1's,
and the length of the $n$-th block is $n$ or $n+1$. Since we have to
choose between the lengths $n$ and $n+1$ for each $n$, the set $E$ is
uncountable. We claim that $E\subset D$. Fix an element $s\in E$. The
trajectory of $s$ is of course countable. It remains to count the
accumulation points of this trajectory (that is, of the $\omega$-set
of $s$). If we fix the size of a window and slide it sufficiently far
to the right along the sequence $s$, we see in this window only one or
two blocks. This means that every element of the $\omega$-limit set of
$s$ will consist of one or two blocks. However, there are only
countably many such sequences. This proves our claim, and hence $D$ is
uncountable.

The 2-shift is transitive, and therefore the set of points with dense
trajectories contains a dense $G_\delta$ set $G$. If $G$ is countable,
then for every $s\in G$ the set $\Sigma\setminus\{s\}$ is open and
dense, so the intersection of all those sets, $\Sigma\setminus G$, is
a dense $G_\delta$ set. Therefore $(\Sigma\setminus G)\cap G$ is also
a dense $G_\delta$ set, but it is empty. This contradiction shows that
$G$ is uncountable. Since for every $s\in G$ the closure of the
trajectory of $S$ is $\Sigma$, we have $G\subset\Sigma\setminus D$,
and thus $\Sigma\setminus D$ is uncountable.
\end{proof}

\begin{theorem}\label{Markov}
Fix an integer $n\ge 2$. Then there are uncountable sets\break
$\Lambda_M\subset [2n+1,2n+3]$ and $\Lambda_{nM}\subset [2n+1,2n+3]$
such that for every $\lambda\in\Lambda_M$ the lifting $F_\lambda$ is
Markov and for every $\lambda\in\Lambda_{nM}$ the lifting $F_\lambda$
is not Markov.
\end{theorem}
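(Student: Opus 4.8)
The plan is to reduce the Markov/non-Markov dichotomy for $F_\lambda$ to the dichotomy for the full $2$-shift supplied by Lemma~\ref{shift}. Write $\lambda=2n+1+2t$, so that $t$ ranges over $[0,1]$ as $\lambda$ ranges over $[2n+1,2n+3]$, and recall $c=(\lambda-1)/2=n+t$. Let $f_\lambda:\Sone\to\Sone$ be the circle map lifted by $F_\lambda$. By the given characterization, $F_\lambda$ is Markov precisely when the closure of the forward orbit of the turning point is countable; since that turning point projects to $b\in\Sone$ with critical value $t$ (mod $1$), and adjoining the single point $b$ cannot change countability, $F_\lambda$ is Markov iff the closure of $\{f_\lambda^{\,j}(t)\}_{j\ge0}$ is countable. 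On the increasing lap $[0,b]$ one computes $f_\lambda(x)=\lambda x \bmod 1$, so each interval $J_i=[i/\lambda,(i+1)/\lambda)$ with $(i+1)/\lambda\le b$ is mapped homeomorphically by $f_\lambda$ onto all of $\Sone$. Because $n\ge2$, the three branches $J_0,J_1,J_2$ are available for every $\lambda$ in the range, and I would fix the two branches $J_0$ and $J_2$, whose closures are disjoint. Then $K_\lambda=\{x: f_\lambda^{\,j}(x)\in J_0\cup J_2 \text{ for all } j\ge0\}$ is a Cantor set, and reading off membership in $J_0,J_2$ as the symbols $0,1$ gives a homeomorphism $\pi_\lambda:\Sigma\to K_\lambda$ conjugating $\sigma$ to $f_\lambda|_{K_\lambda}$.

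The heart of the argument is to place the critical value onto a prescribed coded orbit. Fix $s\in\Sigma$. The point $\pi_\lambda(s)$ lies in $J_{s_0}\subset[0,b)$, hence in $[0,1)$, while the critical value position $t=t(\lambda)$ increases affinely from $0$ (at $\lambda=2n+1$) to $1$ (at $\lambda=2n+3$). Since every inverse branch contracts by the uniform factor $1/\lambda\le 1/(2n+1)$, the point $\pi_\lambda(s)$ is a uniformly convergent limit of finite compositions of inverse branches, each depending continuously on $\lambda$; hence $\lambda\mapsto\pi_\lambda(s)$ is continuous. The quantity $t(\lambda)-\pi_\lambda(s)$ is $\le0$ at $\lambda=2n+1$ and $>0$ at $\lambda=2n+3$, so the Intermediate Value Theorem furnishes $\lambda(s)\in[2n+1,2n+3]$ with $t(\lambda(s))=\pi_{\lambda(s)}(s)$; that is, the critical value of $f_{\lambda(s)}$ is exactly the point of $K_{\lambda(s)}$ with itinerary $s$. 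The assignment $s\mapsto\lambda(s)$ is injective: for a fixed parameter the critical value is a single point with a single itinerary, so $\lambda(s)=\lambda(s')$ forces $\pi(s)=\pi(s')$ and hence $s=s'$.

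With this correspondence the conclusion is immediate. For $\lambda=\lambda(s)$ the forward orbit of the critical value is $\{\pi(\sigma^j s)\}_{j\ge0}$, and since $\pi$ is a homeomorphism onto the compact set $K_\lambda$ it carries closures to closures; thus the critical orbit has countable closure iff $\{\sigma^j s\}_{j\ge0}$ does, i.e. iff $s\in D$. Hence $F_{\lambda(s)}$ is Markov exactly when $s\in D$. Setting $\Lambda_M=\{\lambda(s):s\in D\}$ and $\Lambda_{nM}=\{\lambda(s):s\in\Sigma\setminus D\}$, injectivity of $s\mapsto\lambda(s)$ together with the uncountability of both $D$ and $\Sigma\setminus D$ from Lemma~\ref{shift} shows that these are uncountable subsets of $[2n+1,2n+3]$ consisting of Markov and non-Markov parameters, respectively. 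The step I expect to be the main obstacle is the realization step: justifying that $\lambda\mapsto\pi_\lambda(s)$ is genuinely continuous (uniform expansion makes this routine, but it must be proved) and that the full $2$-shift subsystem persists throughout the whole parameter interval. It is precisely here that the hypothesis $n\ge2$, which guarantees two full branches with disjoint closures for every $\lambda\in[2n+1,2n+3]$, is needed.
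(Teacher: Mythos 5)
Your proof is correct and follows essentially the same route as the paper: build a parameter-dependent invariant Cantor set on which the dynamics is conjugate to the full $2$-shift, invoke Lemma~\ref{shift} for uncountably many itineraries of each type, check continuous dependence of coded points on $\lambda$, and use the intermediate value theorem to land the critical value on a prescribed coded orbit. The only (immaterial) difference is the choice of horseshoe --- you use two full increasing branches $J_0,J_2$ of the expanding lap on the circle, while the paper codes the set of points whose $F_\lambda$-orbits remain in $[0,1]$; your explicit treatment of the injectivity of $s\mapsto\lambda(s)$ is a welcome bit of extra care.
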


\begin{proof}
Let $A_\lambda$ be the set of those points $x$ such that $F_\lambda^i(x)\in
[0,1]$ for $i=0,1,2,\dots$. Perform the standard coding procedure,
using the left and right subintervals of $[0,1]\cap F_\lambda^{-1}
([0,1])$. It shows that $A_\lambda$ is a Cantor set, and $F_\lambda$
restricted to this set is conjugate to the one-sided full 2-shift. By
the standard argument, for any given itinerary the corresponding point
of $A_\lambda$ depends continuously on $\lambda$. By
Lemma~\ref{shift}, uncountably many itineraries correspond to points
whose trajectories have countable closures, and uncountably many
itineraries correspond to points whose trajectories have uncountable
closures. As $\lambda$ varies from $2n+1$ to $2n+3$ then the image
$-n+(\lambda-1)/2$ under $F_\lambda$ of the local maximum
$-n+(\lambda+1)/(2\lambda)$ sweeps the interval $[0,1]$. When it meets
a point with a countable closure of the trajectory, the corresponding
map $F_\lambda$ is Markov; when it meets a point with an uncountable
closure of the trajectory, it is not Markov. This completes the proof.
\end{proof}

\end{document}